\definecolor{deepgreen}{cmyk}{0.99998,0,1,0}
\theoremstyle{definition}
\newtheorem{defi}{$\mathbf{Definition}$}[section]
\newtheorem*{pro}{$\mathbf{Proof}$}
\theoremstyle{plain}
\newtheorem{theo}[defi]{$\mathbf{Theorem}$}
\newtheorem{lemma}[defi]{$\mathbf{Lemma}$}
\newtheorem{prop}[defi]{$\mathbf{Proposition}$}
\newcommand{\tro}{\mathrm{Tr}}
\newcommand{\pa}{\partial}
\newcommand{\lv}{\left\vert}
\newcommand{\rv}{\right\vert}
\newcommand{\lV}{\left\Vert}
\newcommand{\rV}{\right\Vert}
\newcommand{\bv}{\big\vert}
\newcommand{\Bv}{\Big\vert}
\newcommand{\bbv}{\bigg\vert}
\newcommand{\cref}{\S\ \ref}
\newcommand{\Cref}{\S\ \ref}
\definecolor{pink}{RGB}{249,164,186}
\definecolor{grassgreen}{RGB}{128,255,0}
\numberwithin{equation}{section}
\title{Arithmetic Uniform Ergodicity for Flat Vector Bundles}
\author{Qiaochu Ma$^{1,2}$}
\address{$^1$Department of Mathematics, The Pennsylvania State University}
\address{$^2$Department of Mathematics, Texas A\&M University}
\date{}
\newcommand*{\rom}[1]{\expandafter\@slowromancap\romannumeral #1@}
\begin{document} 
	\clearpage\maketitle
	\begin{abstract}
		In this paper, we prove a uniform version of quantum unique ergodicity for high-frequency eigensections of Pauli-Schrödinger spin operators on a certain series of unitary flat bundles over arithmetic surfaces.
	\end{abstract}
	
	\section{Introduction}

\subsection{Background}\label{A1''}

	Shnirelman's \emph{quantum ergodicity} (QE) \cite{MR0402834} stated that on a compact Riemannian manifold whose geodesic flow is \emph{ergodic} with respect to the \emph{Liouville measure}, the eigenfunctions of the Laplacian tend to be \emph{equidistributed} except for a \emph{density zero} subsequence.

	The \emph{quantum unique ergodicity} conjecture (QUE) of Rudnick-Sarnak \cite{MR1266075} asserted that for Riemannian manifolds of negative sectional curvature, all eigenfunctions equidistribute. Lindenstrauss \cite{MR2195133} established arithmetic quantum unique ergodicity (AQUE). Arithmetic surfaces carry additional symmetries called \emph{Hecke operators}, and these operators commute with each other and with the Laplacian, so it is natural to consider the \emph{joint} eigenfunctions of the Laplacian and all Hecke operators, called \emph{Hecke-Maass forms}. The non-compact AQUE follows from combining \cite{MR2195133} with Soundararajan \cite{MR2680500}, which ruled out the escape of mass.

	In \cite{MR2838248, MR3615411}, Bismut-X. Ma-Zhang obtained the asymptotics of analytic torsions for a series of highly \emph{non-unitary} flat vector bundles, and this was further extended by the author \cite{MR4665497} for full asymptotic analytic torsions and by Puchol \cite{MR4611826} for asymptotic holomorphic torsions.
	
	Inspired by the framework and technique of \cite{MR2838248,MR3615411,MR4665497,MR4611826}, in joint work with M. Ma \cite{Ma-Ma}, we introduced a \emph{uniform version} of QE for a series of \emph{unitary} flat vector bundles. In this paper, we prove the corresponding uniform QUE for unitary flat vector bundles on arithmetic surfaces. We now explain this in detail.

	\subsection{Main results}

	\subsubsection{Geometric and analytic setup}

		Let $\mathbb{H}^2$ be the two-dimensional hyperbolic space. Let $L$ be a \emph{totally real number field} with $L\neq \mathbb{Q}$ and $D_{a,b}(L)$ a quaternion algebra over $L$ that \emph{is split at exactly one infinite place}. Let $\Gamma$ be a congruence subgroup of the reduced norm one group of $D_{a,b}(L)$ and
	\begin{equation}\label{1.1''}
		X=\Gamma\backslash{\mathbb{H}^2}
	\end{equation}
	the associated hyperbolic surface with volume form $dv_X$.
	
	Fix an infinite place $\rho\colon L\hookrightarrow \mathbb R$ distinct from the split one, then it extends to a $\mathrm{SU}_2(\mathbb{C})$-representation
	\begin{equation}\label{1.1..}
		\rho\colon \Gamma\to \mathrm{SU}_2(\mathbb{C}).
	\end{equation}
	Note that $\mathrm{SU}_2(\mathbb{C})$ acts unitarily on the complex linear space $(\mathbb{C}^2,h^{\mathbb{C}^2})$ as well as its $p$-th symmetric power $\big(\mathrm{Sym}^p(\mathbb{C}^2),h^{\mathrm{Sym}^p(\mathbb{C}^2)}\big)$ for any $p\in\mathbb{N}$, and in fact, these are all irreducible representations of $\mathrm{SU}_2(\mathbb{C})$. Then by regarding $\rho$ in \eqref{1.1..} as the holonomy of a flat principal $\mathrm{SU}_2(\mathbb{C})$-bundle over $X$, we can construct a series of unitary flat vector bundles $\{F_p\}_{p\in\mathbb{N}}$ over $X$ by
	\begin{equation}\label{a1.}
		\begin{split}
			F_p&=\Gamma\backslash\big({\mathbb{H}^2}\times \mathrm{Sym}^p(\mathbb{C}^2)\big)\\
			&=\big\{(x,v)\in{\mathbb{H}^2}\times \mathrm{Sym}^p(\mathbb{C}^2)\big\}/\big((x,\alpha)\sim(\gamma x,\rho(\gamma)\alpha)\ \text{for\ any\ }\gamma\in\Gamma\big).
		\end{split}
	\end{equation}

	Let $C^\infty({\mathbb{H}^2},\mathrm{Sym}^p(\mathbb{C}^2))$ be the space of smooth $\mathrm{Sym}^p(\mathbb{C}^2)$-valued functions on ${\mathbb{H}^2}$, and we have a $\Gamma$-action on it such that for any $\gamma\in\Gamma$ and $u\in {C}^\infty({\mathbb{H}^2},\mathrm{Sym}^p(\mathbb{C}^2))$,
	\begin{equation}\label{1.3''}
		\big(\gamma(u)\big)(x)=\rho(\gamma)u(\gamma^{-1}x),
	\end{equation}
	then it follows from \eqref{a1.} that the space ${C}^\infty(X,F_p)$ of smooth sections of $F_p$ on $X$ is isomorphic to the $\Gamma$-invariant subspace ${C}^\infty_\Gamma\big({\mathbb{H}^2},\mathrm{Sym}^p(\mathbb{C}^2)\big)$ of ${C}^\infty({\mathbb{H}^2},\mathrm{Sym}^p(\mathbb{C}^2))$, that is,
	\begin{equation}\label{1.3'}
		{C}^\infty(X,F_p)\cong {C}^\infty_\Gamma\big({\mathbb{H}^2},\mathrm{Sym}^p(\mathbb{C}^2)\big).
	\end{equation}
	Therefore, naturally we can define a flat connection $\nabla^{F_p}$ and a Hermitian metric $h^{F_p}$ on $F_p$, using the derivative $d$ on ${\mathbb{H}^2}$ and $h^{\mathrm{Sym}^p(\mathbb{C}^2)}$. Let $\langle\cdot,\cdot\rangle_{L^2(X,F_p)}$ be the $L^2$-metric on ${C}^\infty(X,F_p)$ induced by $(h^{F_p},dv_X)$.

	Let $\Delta^{F_p}$ be the nonnegative Laplacian acting on ${C}^\infty(X,F_p)$, which is also referred to as the \emph{Schrödinger-Pauli spin $p/2$ operator} or \emph{twisted Schrödinger operator}, and by \eqref{1.3'}, for local coordinates $(x_1,x_2)$ of ${\mathbb{H}^2}$, we have
	\begin{equation}
		\Delta^{F_p}=-x_2^2\big({\pa_ {x_1}^2}+{\pa_{x_2}^2}\big).
	\end{equation}
	We list all the eigenvalues $0\leqslant\lambda_{p,1}\leqslant\lambda_{p,2}\leqslant\cdots$ of $\Delta^{F_p}$ counted with multiplicity and associated orthonormal eigensections
	\begin{equation}\label{a1}
		\Delta^{F_p}u_{p,j}=\lambda_{p,j}u_{p,j},\quad\lV u_{p,j}\rV_{L^2(X,F_p)}=1.
	\end{equation}

	There is an extra family of operators acting on ${C}^\infty(X,F_p)$ called \emph{Hecke operators}, see \cref{Cc}. They commute with each other and $\Delta^{F_p}$. Therefore, we now assume further that each $u_{p,j}$ in \eqref{a1} is also an eigensection of \emph{all} Hecke operators.

	\subsubsection{Arithmetic QUE}
	
	We now state our first main result, namely AQUE for a single flat vector bundle, see Theorem \ref{A2'} for the full statement and its proof.
	\begin{theo}\label{A2}
		For any fixed $p\in\mathbb{N}$, let $(X,F_p)$ be defined in \eqref{1.1''} and \eqref{a1.}. Then for any $A\in{C}^\infty(X,\mathrm{End}(F_p))$, we have
		\begin{equation}\label{a6}
			\lim_{j\to\infty}\langle Au_{p,j},u_{p,j}\rangle_{L^2(X,F_p)}=\frac{1}{\dim_\mathbb{C} F_p\cdot \mathrm{Vol}(X)}\int_X\tro^{F_p}[A(x)]dv_X(x).
		\end{equation}
	\end{theo}
	
	Theorem \ref{A2} asserts that, for each of $\{F_p\}_{p\in\mathbb{N}}$, high-frequency eigensections satisfy an equidistribution property. It is natural to ask about the \emph{uniformity} of \eqref{a6} with respect to $p\in\mathbb{N}$. However, the first problem is how to test sections of different bundles. A natural first step is to consider the test space ${C}^\infty(X)$. Equivalently, this corresponds to checking the equidistribution of the probability measures
	\begin{equation}\label{1.9'.}
		\big\{\Vert u_{p,j}(x)\Vert_{F_p}^2dv_{X}(x)\big\}_{p,j\in\mathbb{N}}
	\end{equation}
	on $X$. Unfortunately, the above measures overlook the rich information of eigensections along the fibre, since taking the pointwise norm $\lV\cdot\rV_{F_p}$ loses too much information. Instead, we shall find a space larger than ${C}^\infty(X)$ that can capture fibrewise information. We now discuss this in detail.

	\subsubsection{Arithmetic uniform QUE}\label{s1.2.2}

	Let us begin by giving a geometric interpretation of the eigensections $u_{p,j}$ in \eqref{a1}. 
	
	We form the $3$-sphere
	\begin{equation}
		\mathbb{S}^3=\{z=(z_0,z_1)\in\mathbb{C}^2\mid\vert z_0\vert^2+\vert z_1\vert^2=1\}
	\end{equation}
	and let $dv_{\mathbb{S}^3}$ be the Euclidean measure on $\mathbb{S}^3$. Let $\langle\cdot,\cdot\rangle_{L^2(\mathbb{S}^3)}$ be the $L^2$-metric on ${C}^\infty(\mathbb{S}^3)$ induced by $dv_{\mathbb{S}^3}$. Moreover, for any $g\in\mathrm{SU}_2(\mathbb{C}), \alpha\in {C}^\infty(\mathbb{S}^3)$, we define $g\alpha\in{C}^\infty(\mathbb{S}^3)$ by
	\begin{equation}\label{1.8'}
		(g\alpha)(z_0,z_1)=\alpha(g^{-1}\cdot(z_0,z_1)).
	\end{equation}
	
	Now we view
	\begin{equation}\label{1.10...}
		\mathrm{Sym}^p(\mathbb{C}^2)\subset {C}^\infty(\mathbb{S}^3),
	\end{equation}
	as the space of homogeneous polynomial functions of degree $p$, that is,
	\begin{equation}\label{1.12}
		\mathrm{Sym}^p(\mathbb{C}^2)=\{a_0z_0^p+a_1z_0^{p-1}z_1+\cdots+a_p{z_1}^p\mid a_0,\cdots,a_p\in\mathbb{C}\}.
	\end{equation}
	Then $\langle\cdot,\cdot\rangle_{L^2(\mathbb{S}^3)}$ restricts to the metric  $h^{\mathrm{Sym}^p(\mathbb{C}^2)}$ on $\mathrm{Sym}^p(\mathbb{C}^2)$, and \eqref{1.8'} restricts to the action of $g$ on $\mathrm{Sym}^p(\mathbb{C}^2)$.

	For any $u_{p,j}\in{C}^\infty\big(X,F_p\big)$ given in \eqref{a1}, using \eqref{1.3'}, it lifts to a $\Gamma$-invariant $\mathrm{Sym}^p(\mathbb{C}^2)$-valued function on ${\mathbb{H}^2}$, and by \eqref{1.10...} and \eqref{1.12}, we can regard $u_{p,j}\in{C}^\infty({\mathbb{H}^2}\times\mathbb{S}^3)$ by
	\begin{equation}\label{1.14''}
		u_{p,j}\colon (x,z)\in {\mathbb{H}^2}\times\mathbb{S}^3\mapsto\big(u_{p,j}(x)\big)(z),
	\end{equation}
	where $\big(u_{p,j}(x)\big)\in \mathrm{Sym}^p(\mathbb{C}^2)$ and we evaluate at $z\in \mathbb{S}^3$. By \eqref{1.8'} and \eqref{1.14''}, and since $u_{p,j}$ is invariant with respect to the $\Gamma$-action given in \eqref{1.3''}, we have
	\begin{equation}\label{1.14'''}
		\begin{split}
			u_{p,j}(\gamma x,\rho(\gamma) z)&=\big(u_{p,j}(\gamma x)\big)(\rho(\gamma)z)=\big(\rho(\gamma) u_{p,j}(x)\big)\big(\rho(\gamma)z\big)\\
			&=\big(u_{p,j}(x)\big)(z)=u_{p,j}(x,z)
		\end{split}
	\end{equation}
	where $\rho(\gamma)\in\mathrm{SU}_2(\mathbb{C})$ acts naturally on $z\in\mathbb{S}^3$. Therefore, if we define a flat $\mathbb{S}^3$-bundle $\mathscr{M}$ over $X$ by
	\begin{equation}\label{1.10}
		\begin{split}
			\mathscr{M}&=\Gamma\backslash\big({\mathbb{H}^2}\times\mathbb{S}^3\big)\\
			&=\{(x,z)\in{\mathbb{H}^2}\times\mathbb{S}^3\}/\big((x,z)\sim (\gamma x,\rho(\gamma)z)\ \text{for\ any\ }\gamma\in\Gamma\big),
		\end{split}
	\end{equation}
	by \eqref{1.14'''}, $u_{p,j}$ passes to a smooth function in ${C}^\infty(\mathscr{M})$. Let $dv_{\mathscr{M}}(x,z)$ be the measure on $\mathscr{M}$ locally given by $dv_{X}(x)dv_{\mathbb{S}^3}(z)$, or equivalently, $dv_{{\mathbb{H}^2}}dv_{\mathbb{S}^3}$ is a volume form on ${\mathbb{H}^2}\times\mathbb{S}^3$, which is invariant with respect to the diagonal $\Gamma$-action as in \eqref{1.10}, so it passes to the volume form $dv_{\mathscr{M}}(x,z)$. Let $\lv\cdot\rv_{\mathbb{C}}$ be the modulus on $\mathbb{C}$. We end up with probability measures
	\begin{equation}\label{1.14'}
		\big\{\lv u_{p,j}(x,z)\rv_\mathbb{C}^2d{v}_{\mathscr{M}}(x,z)\big\}_{p,j\in\mathbb{N}}
	\end{equation}
	on $\mathscr{M}$. The measures in \eqref{1.14'} are the desired refinement of measures in \eqref{1.9'.}, in the sense that the measures in \eqref{1.14'} project along the $\mathbb{S}^3$-fibre to the measures in \eqref{1.9'.}, see \cref{S2.2..} for more details.

	Now we can state our second main result, arithmetic uniform QUE, which asserts the equidistribution property of the measures in \eqref{1.14'}, in a \emph{uniform} manner with respect to $p\in\mathbb{N}$, and see Theorem \ref{E6''} for a full version with momentum variables and its proof.
	
	\begin{theo}\label{C9'}
		For measures constructed in \eqref{1.14'}, we have for any $\mathscr{A}\in{C}^\infty(\mathscr{M})$,
		\begin{equation}\label{9.}
			\lim_{\lambda\to\infty}\sup_{\substack{(p,j)\in\mathbb{N}^{2},\\
					\lambda_{p,j}\geqslant \lambda}}\lv\int_\mathscr{M}\mathscr{A}\lv u_{p,j}\rv_\mathbb{C}^2d{v}_{\mathscr{M}}-\frac{1}{\mathrm{Vol}(\mathscr{M})}\int_{\mathscr{M}}\mathscr{A}dv_{\mathscr{M}}\rv=0.
		\end{equation}
	\end{theo}
	
	Theorem \ref{C9'} asserts that $\lv u_{p,j}\rv_\mathbb{C}^2d{v}_{\mathscr{M}}$ tends to be equidistributed as long as its corresponding eigenvalue $\lambda_{p,j}$ is large, regardless of the spin quantum number $p\in\mathbb{N}$. We emphasize that it is this uniformity that makes Theorem \ref{C9'} noteworthy, and when we fix a $p\in\mathbb{N}$ in \eqref{9.}, it is equivalent to \eqref{a6}, see \cref{s3.2} for details.

	\subsection{Main techniques}

We emphasize that the dynamical ingredients used in this paper are not
new, they are drawn from existing works and applied here in a new
geometric setting.

The proofs of Theorems \ref{A2} and \ref{C9'} closely follow the strategy developed by Lindenstrauss \cite{MR2195133} for the AQUE conjecture and proceed in three steps, geodesic invariance, Hecke recurrence and positive entropy. The notes of Einsiedler-Ward \cite{Arizona} provide an accessible introduction to Lindenstrauss’s approach, and much of our argumentation follows this source, with adaptations needed to fit our setting.

	We note that \cite{MR2195133} treats congruence lattices over $\mathbb{Q}$, whereas our setting involves congruence lattices over a totally real number field $L\neq\mathbb{Q}$. Accordingly, we rely on the extension of Lindenstrauss’s work by Shem-Tov-Silberman \cite{MR5036694} for general number fields. Finally, our use of the positive entropy argument is largely based on the approaches of Bourgain-Lindenstrauss \cite{MR1957735} and Silberman-Venkatesh \cite{MR4033919}.

	We now explain how the study of endomorphisms of vector bundles can be reduced to that of functions.

	\subsubsection{Arithmetic QUE}\label{A.3.1}

	Consider the unit tangent bundle $U(X)$ of $X$ given by
	\begin{equation}\label{1.19}
		U(X)\cong\Gamma\backslash \mathrm{SL}_2(\mathbb{R}),
	\end{equation}
	together with a natural projection map $\pi\colon U(X)\to X$ and the \emph{Liouville volume form} $dv_{U(X)}$, passing from the $\Gamma$-quotient of the Haar volume form $dv_{\mathrm{SL}_2(\mathbb{R})}$ on $\mathrm{SL}_2(\mathbb{R})$. Let us \emph{fix} a $p\in\mathbb{N}$, and put
	\begin{equation}\label{1.20}
		\pi^*F_p=\Gamma\backslash(\mathrm{SL}_2(\mathbb{R})\times \mathrm{Sym}^p(\mathbb{C}^2)),
	\end{equation}
	the pull-back flat vector bundle of $F_p$ over $U(X)$. 
	
	The \emph{geodesic flow} $(g_t)$ on $U(X)$ is induced by the right action of 
	\begin{equation}\label{2.12}
		g_t=\left(\begin{matrix}e^{t/2}&0\\0&e^{-t/2}\end{matrix}\right)
	\end{equation}
	on $\mathrm{SL}_2(\mathbb{R})$. Using semiclassical analysis, any weak star limit $L_{p,X}$ of functionals
	\begin{equation}\label{1.22}
		\big\{A\in{C}^\infty\big(X,\mathrm{End}(F_p)\big)\mapsto\langle A u_{p,j},u_{p,j}\rangle_{L^2(X,F_p)}\in\mathbb{C}\big\}_{j\in\mathbb{N}}
	\end{equation}
	when $\lambda_{p,j}\to\infty$ can be lifted to a \emph{geodesic invariant} functional
	\begin{equation}
		L_{U(X),p}\colon C^\infty\big(U(X),\pi^*\mathrm{End}({F}_p)\big)\to \mathbb{C}.
	\end{equation}
	We shall derive \eqref{a6} by proving for any $A\in{C}^\infty(U(X),\pi^*\mathrm{End}({F}_p))$,
	\begin{equation}\label{1.18''}
		L_{U(X),p}(A)=\frac{1}{\dim_\mathbb{C} F_p\cdot \mathrm{Vol}(U(X))}\int_{U(X)}\tro^{\pi^*F_p}[A]dv_{U(X)}.
	\end{equation}

	To get \eqref{1.18''}, a key simplification is that by the Borel density theorem \cite[Corollary 4.5.6]{MR3307755}, the image $\rho(\Gamma)\subset \mathrm{SU}_2(\mathbb{C})$ of the map given in \eqref{1.1..} is \emph{dense}. Therefore, it is sufficient to show that the restriction of $L_{U(X),p}$ to ${C}^\infty(U(X))\cdot\mathrm{Id}_{\pi^*F_p}$ is the normalized Liouville measure $\frac{1}{\mathrm{Vol}(U(X))}dv_{U(X)}$, see the proof of Theorem \ref{E5'} for more details. In other words, we shall prove that for any $f\in {C}^\infty(U(X))$,
	\begin{equation}\label{1.18..}
		L_{U(X),p}\big(f\mathrm{Id}_{F_p}\big)=\frac{1}{\mathrm{Vol}(U(X))}\int_{U(X)}fdv_{U(X)}.
	\end{equation}

	\subsubsection{Arithmetic Uniform QUE}\label{S1.3.2}

	Recall the flat $\mathbb{S}^3$-bundle $\mathscr{M}$ over $X$ given in \eqref{1.10}. We then form its pullback $\mathbb{S}^3$-bundle $\pi^*\mathscr{M}$ over $U(X)$ by
	\begin{equation}\label{1.25}
		\pi^*\mathscr{M}\cong \Gamma\backslash(\mathrm{SL}_2(\mathbb{R})\times\mathbb{S}^3)
	\end{equation}
	with a volume form $dv_{\pi^*\mathscr{M}}$ locally given by $dv_{U(X)}dv_{\mathbb{S}^3}$, passing from the $\Gamma$-quotient of $dv_{\mathrm{SL}_2(\mathbb{R})}dv_{\mathbb{S}^3}$.

	For any weak star limit $\mu_{\mathscr{M}}$ of measures $\{\lv u_{p,j}\rv_\mathbb{C}^2d{v}_{\mathscr{M}}\}_{p,j\in\mathbb{N}}$ on $\mathscr{M}$ given in \eqref{1.14'} when $\lambda_{p,j}\to\infty$, we can construct its microlocal lift measure $\mu_{\pi^*\mathscr{M}}$ on $\pi^*\mathscr{M}$, which is invariant with respect to the \emph{horizontal geodesic flow} on $\pi^*\mathscr{M}$, denoted by $(\bm{g}_t)$, defined by the right action of $g_t$ given in \eqref{2.12} on the $\mathrm{SL}_2(\mathbb{R})$ component of $\pi^*\mathscr{M}$, that is,
	\begin{equation}\label{1.26}
		\bm{g}_t\colon (y,z)\in\pi^*\mathscr{M} \rightarrow (yg_t,z)\in \pi^*\mathscr{M}.
	\end{equation}
	
	Compared with the semiclassical analysis on a single vector bundle in \cref{A.3.1}, the main difference here is that $p$ is \emph{not} fixed. We must therefore keep track of uniformity over the infinite family of bundles. This is realized by carrying out analysis on \emph{an infinite dimensional vector bundle} using \eqref{1.10...}. We follow the representation-theoretic construction of Wolpert \cite{MR1838659} and Lindenstrauss \cite{MR1859345}, which is an alternative to the original pseudo-differential approach of Schnirelman \cite{MR0402834}, Colin de Verdière \cite{MR818831} and Zelditch \cite{MR916129}.

	We shall show that
	\begin{equation}\label{1.23''}
		\mu_{\pi^*\mathscr{M}}=\frac{1}{\mathrm{Vol}(\pi^*\mathscr{M})}d{v}_{\pi^*\mathscr{M}}.
	\end{equation}
	Similar to the reduction of \eqref{1.18''} to \eqref{1.18..}, to establish \eqref{1.23''}, it is suﬀicient to check that the restriction of $\mu_{\pi^*\mathscr{M}}$ to ${C}^\infty(U(X))$ is the normalized Liouville measure $\frac{1}{U(X)}dv_{U(X)}$, see also the proof of Theorem \ref{E5'}. In other words, for any $f\in{C}^\infty(U(X))$,
	\begin{equation}\label{1.24''}
		\mu_{\pi^*\mathscr{M}}(f)=\frac{1}{\mathrm{Vol}(U(X))}\int_{U(X)}fd{v}_{U(X)}.
	\end{equation}

	\subsection{Mixed quantization}

	In \cite[\S\,4]{Ma-Ma}, the microlocal lift $\mu_{\pi^*\mathscr{M}}$ and the Uniform QE were established using the general \emph{mixed quantization} framework. Subsequently, Cekić-Lefeuvre \cite[Theorem 5.1.7]{cekić2024semiclassicalanalysisprincipalbundles} obtained a related but weaker, non-uniform version of QE.
	
Mixed quantization is the composition of the following maps
	\begin{equation}\label{diag1}
		{C}^\infty(\pi^*\mathscr{M})\xrightarrow{T_{\cdot,p}}{C}^\infty\big(U(X),\pi^*\mathrm{End}(F_p)\big)\xrightarrow{\mathrm{Op}_h(\cdot)}\mathrm{End}\big(L^2(X,F_p)\big),
	\end{equation}
	where $T_{\cdot,p}$ is the \emph{Berezin-Toeplitz quantization} along the fibre direction and controls  the behavior of an infinite number of linear spaces, while $\operatorname{Op}_h$ is the \emph{Weyl quantization} along
	the base manifold $X$ and controls the high frequency eigensections. Strictly speaking, this is a slight abuse of terminology, since the Berezin-Toeplitz quantization is performed along $\mathbb{S}^3/\mathbb{S}^1\cong\mathbb{CP}^1$ rather than $\mathbb{S}^3$. Combining these quantizations enables simultaneous control of the high-frequency eigensections of an infinite number of bundles. For more details on Berezin-Toeplitz and Weyl quantizations, we refer to X. Ma-Marinescu \cite[\S\,7]{MR2339952} and Zworski \cite[\S\,4]{MR2952218} respectively. The measures in \eqref{1.14'} correspond to $T_{\cdot,p}$, and the microlocal lift $\mu_{\pi^*\mathscr{M}}$ corresponds to $\mathrm{Op}_h(\cdot)$. More precisely, $\mu_{\pi^*\mathscr{M}}$ is a weak-star limit of the following functionals
	\begin{equation}
		\Big\{\mathscr{B}\in{C}^\infty(\pi^*\mathscr{M})\mapsto \big\langle \mathrm{Op}_{\lambda_{p,j}^{-1/2}}(T_{\mathscr{B},p}) u_{p,j},u_{p,j}\big\rangle_{L^2(X,F_p)}\Big\}_{p,j},
	\end{equation}
	when $\lambda_{p,j}\to\infty$.
	
Since the geometric setting of this paper is quite specific, we do not attempt to introduce the mixed quantization framework in full generality. Rather, we present a very special case sufficient for our purposes, namely to obtain uniform estimates and to show that Theorem \ref{C9'} implies Theorem \ref{A2}, see \cref{s2.4}, \cref{s2.5}, \cref{s2.6}, \cref{s3.2} and \cref{s3.3'} for details.

	\subsection{Organization of the paper}

	This paper is organized as follows. In \Cref{A}, we reinterpret the geometric setup using an infinite dimensional flat vector bundle $\mathscr{F}$ over $X$. In \cref{S3AU}, we prove the main results, Theorems \ref{A2} and \ref{C9'}, under several properties that will be verified in later sections. In \Cref{B}, we construct microlocal lifts of high frequency eigensections of the Laplacian $\Delta^{\mathscr{F}}$ acting on ${C}^\infty(X,\mathscr{F})$ and establish their geodesic invariant property. In \cref{C}, we introduce and study Hecke operators acting on $\mathscr{F}$. In \cref{D}, we prove the Hecke recurrence property for microlocal lifts. In \cref{S7}, we establish the positive entropy property for microlocal lifts.

	\subsection{Notation}\label{s1.6}

	Recall the geometric setup of \eqref{1.1''}, \eqref{a1.}, \eqref{1.10}, \eqref{1.19}, \eqref{1.20} and \eqref{1.25}.	Then we collect the notation used in this section here and keep it for the remainder of the paper.
	
	We use $x$ to denote a point of $\mathbb{H}^2$ or $X$, $y$ a point of $\mathrm{SL}_2(\mathbb{R})$ or $U(X)$, and $z$ a point of $\mathbb{S}^3$. Accordingly, a point of $\mathscr{M}$ will be written as $\Gamma(x,z)$, or simply $(x,z)$ when no ambiguity arises. Similarly, a point of $\pi^*\mathscr{M}$ will be written as $\Gamma(y,z)$, or $(y,z)$. 
	
	Moreover, we use $u, s, \alpha, A, B, \mathscr{A}$ and $\mathscr{B}$, possibly with subscripts, to denote sections or functions of the following types, $u\in C^\infty(X,{F}_p)$ or $C^\infty(X,\mathscr{F})$, $s\in C^\infty(U(X),\pi^*{F}_p)$ or $C^\infty(U(X),\pi^*\mathscr{F})$, $\alpha\in \mathrm{Sym}^p(\mathbb{C}^2)$ or $C^\infty(\mathbb{S}^3)$, $A\in C^\infty(X,\mathrm{End}({F}_p))$, $B\in C^\infty(U(X),\pi^*\mathrm{End}({F}_p))$, $\mathscr{A}\in{C}^\infty(\pi^*\mathscr{M})$ and $\mathscr{B}\in{C}^\infty(\pi^*\mathscr{M})$, where $\mathscr{F}$ is the infinite dimensional flat vector bundle to be defined in \cref{A}.
	
	Throughout the paper, the symbol $C$ denotes a positive constant. When the dependence of $C$ on certain parameters is important, this will be indicated by a subscript. The value of $C$ may change from line to line, even when the same notation is used.

	\subsection{Acknowledgment}

	I am grateful to Nigel Higson and Guoliang Yu for support and understanding, and to Svetlana Katok for encouragement. I would like to express gratitude to Federico Rodriguez Hertz for pointing out the essential simplification steps \eqref{1.18..} and \eqref{1.24''}, which reduce the test spaces ${C}^\infty(U(X),\pi^*\mathrm{End}(F_p))$ and ${C}^\infty(\pi^*\mathscr{M})$ to the much simpler space ${C}^\infty(U(X))$, making it possible to use the machinery of Lindenstrauss. I would like to thank Snir Ben Ovadia and Louis Ioos for helpful discussions,
	Elon Lindenstrauss for pointing me to Shem-Tov-Silberman \cite{MR5036694}, and John Voight for valuable explanations on quaternionic arithmetic surfaces. I thank the referees for careful reading, useful suggestions and corrections. I was supported by the NSF grants DMS-1952669 and DMS-1546917.

	\section{Infinite Dimensional Flat Vector bundles}\label{A}

	In this section, motivated by \eqref{1.12}, we introduce infinite dimensional counterparts of the finite dimensional geometric objects considered in this paper, such as $\{F_p\}_{p\in\mathbb{N}}$ and $\{\mathrm{End}(F_p)\}_{p\in\mathbb{N}}$. In \cref{s2.1}, we introduce an infinite dimensional vector bundle $\mathscr{F}$. In \cref{S2.2..}, we equip $\mathscr{F}$ with a connection, a metric, and a Laplacian operator. In \cref{s2.3}, we embed $\{F_p\}_{p\in\mathbb{N}}$ into $\mathscr{F}$. In \cref{s2.4}, we recall the Berezin-Toeplitz quantization. In \cref{s2.5}, we introduce a surjective projection from $\mathscr{F}$ to $\{\mathrm{End}(F_p)\}_{p\in\mathbb{N}}$ via quantization. In \cref{s2.6}, we build a correspondence between measures in \eqref{1.14'} and functionals in \eqref{1.22}. In \cref{s2.7}, we discuss the role of uniformity in our estimates.

	We follow the notational conventions introduced in \cref{s1.6}.

	\subsection{An infinite dimensional flat vector bundle}\label{s2.1}

	Recall the definition of $\mathscr{M}$ in \eqref{1.10}, then for $u\in {C}^\infty(\mathscr{M})$, we regard it as a $\Gamma$-invariant function $u\in {C}^\infty({\mathbb{H}^2}\times \mathbb{S}^3)$, that is,
	\begin{equation}\label{2.1..}
		u(\gamma x,\rho(\gamma) z)=u(x,z).
	\end{equation}
	Now we view $u\in{C}^\infty({\mathbb{H}^2},{C}^\infty(\mathbb{S}^3))$ by $u(x)=u(x,\cdot)\in {C}^\infty(\mathbb{S}^3)$, then from \eqref{2.1..}, $u$ is $\Gamma$-invariant with respect to the action
	\begin{equation}
		\big(\gamma(u)\big)(x)=\rho(\gamma)u(\gamma^{-1}x),
	\end{equation}
	where $\rho(\gamma)$ acts on $u(\gamma^{-1}x)$ through \eqref{1.8'}. To verify this, we compute that
	\begin{equation}
		\big(\rho(\gamma)u(x)\big)(z)=u(x)\big(\rho(\gamma)^{-1}z\big)=u\big(x,\rho(\gamma)^{-1}z\big)=u(\gamma x,z)=u(\gamma x)(z).
	\end{equation}
	Therefore, let ${C}^\infty_\Gamma\big({\mathbb{H}^2},{C}^\infty(\mathbb{S}^3)\big)\subset {C}^\infty\big({\mathbb{H}^2},{C}^\infty(\mathbb{S}^3)\big)$ denote the $\Gamma$-invariant subspace, then analogous to  \eqref{1.3'}, we have the following isomorphism 
	\begin{equation}\label{2.5}
		{C}^\infty(\mathscr{M})\cong{C}^\infty_\Gamma\big({\mathbb{H}^2},{C}^\infty(\mathbb{S}^3)\big).
	\end{equation}

	We form an infinite dimensional flat vector bundle $\mathscr{F}$ over $X$ with fibre $C^\infty(\mathbb{S}^3)$ by
	\begin{equation}\label{2.4..}
		\begin{split}
			\mathscr{F}&=\Gamma\backslash\big({\mathbb{H}^2}\times{C}^\infty(\mathbb{S}^3)\big)\\
			&=\big\{(x,\alpha)\in {\mathbb{H}^2}\times{C}^\infty(\mathbb{S}^3)\big\}/\big((x,\alpha)\sim (\gamma x,\rho(\gamma)\alpha) \ \text{for\ any\ }\gamma\in\Gamma\big),
		\end{split}
	\end{equation}
	then we get from \eqref{2.5}
	\begin{equation}\label{2.5..}
		{C}^\infty (\mathscr{M})\cong{C}^\infty(X,\mathscr{F}).
	\end{equation}
	
	Recall $\pi^*\mathscr{M}$ defined in \eqref{1.25}. Let $\pi^*\mathscr{F}$ be the pull back bundle of $\mathscr{F}$ over $U(X)$. Similar to \eqref{2.5..}, we have
	\begin{equation}\label{2.13'}
		{C}^\infty(\pi^*\mathscr{M})\cong C^\infty(U(X),\pi^*\mathscr{F}).
	\end{equation}
	
	For convenience, we summarize the relevant geometric objects in the following diagram.
	\begin{equation}\label{diag1'}
		\begin{tikzcd}[ampersand replacement=\&, column sep=normal,row sep=normal]
			\pi^*\mathscr{F}=\Gamma\backslash \big(\mathrm{SL}_2(\mathbb{R})\times{C}^\infty(\mathbb{S}^3)\big)\arrow[dr,""swap] \&	\pi^*\mathscr{M}=\Gamma\backslash(\mathrm{SL}_2(\mathbb{R})\times\mathbb{S}^3)\arrow[d,"q"]\arrow[dd, bend left=75, "\pi"] \\
			\&	U(X)=\Gamma\backslash \mathrm{SL}_2(\mathbb{R})\arrow[dd, bend left=75, "\pi"] \\
			\mathscr{F}=\Gamma\backslash \big({\mathbb{H}^2}\times{C}^\infty(\mathbb{S}^3)\big)	\arrow[rd,""]  \&	\mathscr{M}=\Gamma\backslash({\mathbb{H}^2}\times\mathbb{S}^3)\arrow[d,"q"]  \\
			\&	X=\Gamma\backslash{\mathbb{H}^2}   
		\end{tikzcd}.
	\end{equation}
	
		Recall the notation introduced in \cref{s1.6}. Note that although the isomorphisms in \eqref{2.5..} and \eqref{2.13'} hold, the pairs $(u,s)$ and $(\mathscr{A},\mathscr{B})$ play different roles in practice, $(u,s)$ are sections of the flat vector bundle, whereas $(\mathscr{A},\mathscr{B})$ are typically used as test functions to detect equidistribution.

	\subsection{Connection, metric, and Laplacian}\label{S2.2..}
	
	Let $h^{\mathscr{F}}$ be the metric on $\mathscr{F}$ induced by $\langle\cdot,\cdot\rangle_{L^2(\mathbb{S}^3)}$ and $\nabla^{\mathscr{F}}$ the natural flat connection on $\mathscr{F}$, then $h^{\mathscr{F}}$ is parallel with respect to $\nabla^\mathscr{F}$. 
	
	Let $\langle\cdot,\cdot\rangle_{L^2(X,\mathscr{F})}$ be the $L^2$-metric on ${C}^\infty(X,\mathscr{F})$ induced by $(h^{\mathscr{F}},dv_{X})$, and let $\langle\cdot,\cdot\rangle_{L^2(\mathscr{M})}$ be the $L^2$-metric on ${C}^\infty(\mathscr{M})$ induced by $dv_{\mathscr{M}}$, then we have
	\begin{equation}\label{2.9..}
		\langle\cdot,\cdot\rangle_{L^2(X,\mathscr{F})}=\langle\cdot,\cdot\rangle_{L^2(\mathscr{M})}.
	\end{equation} 
	To see this, for any $u\in {C}^\infty(X,\mathscr{F})$, using the two ways in \eqref{2.5..} it is viewed, we can take pointwise norms with respect to $\mathscr{F}$ and $\mathbb{C}$, obtaining smooth functions
	\begin{equation}\label{2.10}
		\lv u\rv^2_{\mathscr{F}}\in{C}^\infty(X),\quad\lv u\rv^2_{\mathbb{C}}\in{C}^\infty(\mathscr{M})
	\end{equation}
	accordingly. We compute that locally
	\begin{equation}\label{2.10'}
		\begin{split}
			\lv u\rv_{L^2(X,\mathscr{F})}^2&=\int_X\lv u(x)\rv_{\mathscr{F}}^2dv_X(x)=\int_X\Big(\int_{\mathbb{S}^3}\lv u(x,z)\rv^2 dv_{\mathbb{S}^3}(z)\Big)dv_X(x)\\
			&=\int_{\mathscr{M}}\lv u(x,z)\rv^2dv_\mathscr{M}(x,z)=\lv u\rv_{L^2(\mathscr{M})}^2.
		\end{split}
	\end{equation}

	Similarly, from \eqref{2.13'}, for any $s\in {C}^\infty(U(X),\pi^*\mathscr{F})$, by taking pointwise norms, we get
	\begin{equation}\label{2.12''}
		\lv s\rv^2_{\pi^*\mathscr{F}}\in{C}^\infty(U(X)),\quad\lv s\rv^2_{\mathbb{C}}\in{C}^\infty(\pi^*\mathscr{M})
	\end{equation}
	accordingly, then calculate along \eqref{2.10'}, we get
	\begin{equation}\label{2.13''}
		\langle\cdot,\cdot\rangle_{L^2(U(X),\pi^*\mathscr{F})}=\langle \cdot,\cdot\rangle_{L^2(\pi^*\mathscr{M})}.
	\end{equation}

	Let $\Delta^{\mathscr{F}}$ be the Laplacian acting on ${C}^\infty(X,\mathscr{F})$, then for any $u\in{C}^\infty(X,\mathscr{F})$, by viewing $u\in{C}^\infty_\Gamma\big({\mathbb{H}^2},{C}^\infty(\mathbb{S}^3)\big)$ through \eqref{2.5}, we have
	\begin{equation}\label{2.14}
		\Delta^{\mathscr{F}}u=-x_2^2\big({\pa_ {x_1}^2}+{\pa_{x_2}^2}\big)u.
	\end{equation}
	Note that by \eqref{2.5..}, the operator $\Delta^{\mathscr{F}}$ can be regarded as the partial Laplacian acting on ${C}^\infty(\mathscr{M})$ along the $\mathbb{H}^2$ component.

	\subsection{Embedding finite dimensional vector bundles}\label{s2.3}

	By \eqref{1.10...} and \eqref{1.12}, we can view
	\begin{equation}\label{2.15}
		\Big(\mathrm{Sym}^p(\mathbb{C}^2),h^{\mathrm{Sym}^p(\mathbb{C}^2)}\Big)\longhookrightarrow \Big(C^\infty(\mathbb{S}^3),h^{L^2(\mathbb{S}^3)}\Big)
	\end{equation}
	as the subspace of homogeneous polynomial functions of degree $p$. From \eqref{a1.}, \eqref{1.10...}, \eqref{2.4..}, we have an embedding
	\begin{equation}\label{2.16'}
		\begin{split}
			&\Big(\nabla^{F_p},h^{F_p},\Delta^{F_p},{C}^\infty(X,F_p),{C}^\infty\big(U(X),\pi^*F_p\big)\Big)\\
			&\longhookrightarrow\Big(\nabla^{\mathscr{F}},h^{\mathscr{F}},\Delta^{\mathscr{F}},{C}^\infty(X,\mathscr{F}),{C}^\infty\big(U(X),\pi^*\mathscr{F}\big)\Big).
		\end{split}
	\end{equation}

	Also, we can extend the diagram \eqref{diag1'} to the following
	\begin{equation}\label{diag2}
		\begin{tikzcd}[ampersand replacement=\&, column sep=normal,row sep=normal]
			\pi^*\mathscr{F}=\Gamma\backslash \big(\mathrm{SL}_2(\mathbb{R})\times{C}^\infty(\mathbb{S}^3)\big)\arrow[dr,""swap] \&	\pi^*\mathscr{M}=\Gamma\backslash(\mathrm{SL}_2(\mathbb{R})\times\mathbb{S}^3)\arrow[d,"q"]\arrow[dd, bend left=75, "\pi"] \\
			\pi^*F_p=\Gamma\backslash \big(\mathrm{SL}_2(\mathbb{R})\times\mathrm{Sym}^p(\mathbb{C}^2)\big) \arrow[r,""swap]\arrow[u,hook,""]\&	U(X)=\Gamma\backslash \mathrm{SL}_2(\mathbb{R})\arrow[dd, bend left=75, "\pi"] \\
			\mathscr{F}=\Gamma\backslash \big({\mathbb{H}^2}\times{C}^\infty(\mathbb{S}^3)\big)	\arrow[rd,""]  \&	\mathscr{M}=\Gamma\backslash({\mathbb{H}^2}\times\mathbb{S}^3)\arrow[d,"q"]  \\
			F_p=\Gamma\backslash\big({\mathbb{H}^2}\times\mathrm{Sym}^p(\mathbb{C}^2)\big)\arrow[u,hook,""] \arrow[r,""swap]\&	X=\Gamma\backslash{\mathbb{H}^2}   
		\end{tikzcd}.
	\end{equation}
	
	Using \eqref{2.16'}, we embed all finite dimensional flat vector bundles $\{F_p\}_{p\in\mathbb{N}}$, together with their metrics and Laplace operators, into a single infinite dimensional flat vector bundle $\mathscr{F}$. The only remaining object to handle is the family of endomorphism bundles $\{\mathrm{End}(F_p)\}_{p\in\mathbb{N}}$, which shall be treated in the next subsection using geometric quantization.

	\subsection{Berezin-Toeplitz quantization}\label{s2.4}

	The \emph{Berezin-Toeplitz quantization} procedure
	\begin{equation}\label{2.18}
		T_{\cdot,p}\colon f\in C^\infty(\mathbb{S}^3)\mapsto T_{f,p}\in \mathrm{End}\big(\mathrm{Sym}^p(\mathbb{C}^2)\big)
	\end{equation}
	associates to each function $f\in C^\infty(\mathbb{S}^3)$ a series of operators $\big\{T_{f,p}\in \mathrm{End}\big(\mathrm{Sym}^p(\mathbb{C}^2)\big)\big\}_{p\in\mathbb{N}}$. Given $\alpha\in\mathrm{Sym}^p(\mathbb{C}^2)$, we define $T_{f,p}\alpha$ as follows. We first multiply $\alpha$ by $f$, obtaining the function $f\alpha$, which in general does not lie in $\mathrm{Sym}^p(\mathbb{C}^2)$. We then project $f\alpha$ orthogonally onto $\mathrm{Sym}^p(\mathbb{C}^2)$ to obtain $T_{f,p}\alpha$. Equivalently, we can describe $T_{f,p}$ by
	\begin{equation}\label{2.19'}
		\begin{split}
			&\big\langle T_{f,p}\alpha,\alpha'\big\rangle_{\mathrm{Sym}^p(\mathbb{C}^2)}=\int_{\mathbb{S}^3}\big(T_{f,p}\alpha\big)(z)\overline{\alpha(z)}dv_{\mathbb{S}^3}(z)\\
			&=\big\langle f\alpha,\alpha'\big\rangle_{L^2(\mathbb{S}^3)}=\int_{\mathbb{S}^3}f(z)\alpha(z)\overline{\alpha(z)}dv_{\mathbb{S}^3}(z)
		\end{split}
	\end{equation}
	for any $\alpha,\alpha'\in \mathrm{Sym}^p(\mathbb{C}^2)$, where $dv_{\mathbb{S}^3}$ is the Euclidean measure on $\mathbb{S}^3$.

	\begin{prop}
		For any $f\in C^\infty(\mathbb{S}^3)$ and $p\in\mathbb{N}$, we have a trace formula	
		\begin{equation}\label{2.20}
			\begin{split}
				\frac{1}{\dim_{\mathbb{C}}\mathrm{Sym}^p(\mathbb{C}^2)}\mathrm{Tr}^{\mathrm{Sym}^p(\mathbb{C}^2)}[T_{f,p}]=\frac{1}{\mathrm{Vol}(\mathbb{S}^3)}\int_{\mathbb{S}^3}f(z)dv_{\mathbb{S}^3}(z).
			\end{split}
		\end{equation}
	\end{prop}

	\begin{pro}
		We have the following classical orthonormal basis of $\mathrm{Sym}^p(\mathbb{C}^2)\subset L^2(\mathbb{S}^3)$,
		\begin{equation}\label{2.19}
			\bigg\{\bigg(\frac{(p+1)\tbinom{p}{j}}{\mathrm{Vol}(\mathbb{S}^3)}\bigg)^{1/2}z_0^{p-j}z_1^{j}\bigg\}_{0\leqslant j\leqslant p}.
		\end{equation}
		Let us denote the $j$-th element of \eqref{2.19} by $\alpha_j(z)$, then by \eqref{2.19'} we compute that
		\begin{equation}\label{2.22}
			\begin{split}
				\mathrm{Tr}^{\mathrm{Sym}^p(\mathbb{C}^2)}[T_{f,p}]&=\sum_{j=0}^{p}\big\langle T_{f,p}\alpha_j,\alpha_j\Big\rangle_{\mathrm{Sym}^p(\mathbb{C}^2)}\\
				&=\sum_{j=0}^{p}\int_{\mathbb{S}^3}f(z)\alpha_j(z)\overline{\alpha_j(z)}dv_{\mathbb{S}^3}(z)=\int_{\mathbb{S}^3}f(z)\Big(\sum_{j=0}^{p}\alpha_j(z)\overline{\alpha_j(z)}\Big)dv_{\mathbb{S}^3}(z).
			\end{split}
		\end{equation}
		Furthermore,
		\begin{equation}\label{2.23}
			\begin{split}
				\sum_{j=0}^{p}\alpha_j(z)\overline{\alpha_j(z)}&=\sum_{j=0}^{p}\frac{(p+1)}{\mathrm{Vol}(\mathbb{S}^3)}\tbinom{p}{j}z_0^{p-j}z_1^{j}\overline{z_0}^{p-j}\overline{z_1}^{j}\\
				&=\frac{(p+1)}{\mathrm{Vol}(\mathbb{S}^3)}\big(\lv z_0\rv^2+\lv z_1\rv^2\big)^p=\frac{(p+1)}{\mathrm{Vol}(\mathbb{S}^3)}.
			\end{split}
		\end{equation}
		Combining \eqref{2.22} and \eqref{2.23}, we obtain \eqref{2.20}.
	\end{pro}

	\begin{prop}\label{p2.2}
		For any $p\in\mathbb{N}$, the quantization \eqref{2.18} is surjective.
	\end{prop}
	
	\begin{pro}
		We claim that the functions $\{\alpha_j(z)\overline{\alpha_k(z)}\}_{j,k=0}^p$ are linearly independent. Assuming this, by choosing $f$ in the space spanned by $\{\alpha_j(z)\overline{\alpha_k(z)}\}_{j,k=0}^p$ and solving a linear equation system, we can realize any matrix in $\mathrm{End}\big(\mathrm{Sym}^p(\mathbb{C}^2)\big)$ as the quantization of a smooth function.
		
		We now prove the linear independence. Suppose that for a family of complex numbers $\{a_{j,k}\}_{j,k=0}^p$, we have
		\begin{equation}
			\sum_{j,k}a_{j,k}z_0^{p-j}z_1^{j}\overline{z_0}^{p-k}\overline{z_1}^{k}=0
		\end{equation}
		for all $z=(z_0,z_1)\in \mathbb{S}^3$. By homogeneity, the same identity holds for all $z=(z_0,z_1)\in \mathbb{C}^2$. Taking $z_0=1$ and $z_1=re^{i\theta}$, we get
		\begin{equation}
			\sum_{j,k}a_{j,k}r^{j+k}e^{i\theta(j-k)}=0
		\end{equation}
		for all $r\geqslant 0$ and $\theta\in [0,2\pi]$. Group terms according to the Fourier frequency $j-k=\ell$, and using the linear independence of $\{e^{i\ell\theta}\}_\ell$, we must have
		\begin{equation}
			\sum_{j-k=\ell}a_{j,k}r^{j+k}=	\sum_{k}a_{k+\ell,k}r^{2k+\ell}=0
		\end{equation}
		for all $r\geqslant0$. Since $\{r^{2k+\ell}\}_{k}$ are linearly independent, it follows that $a_{j,k}\equiv0$.\qed
	\end{pro}

	\subsection{Fibrewise quantization}\label{s2.5}

	Instead of an embedding as in \eqref{2.16'}, combining \eqref{2.5..}, \eqref{2.13'}, and \eqref{diag1'}, and applying fibrewise quantization \eqref{2.18}, we get a projection
	\begin{equation}\label{2.27}
		\begin{split}
			\Big(C^\infty(\mathscr{M}),C^\infty(\pi^*\mathscr{M})\Big)&\cong\Big(C^\infty(X,\mathscr{F}),C^\infty(U(X),\pi^*\mathscr{F})\Big)\\
			&\xrightarrow[]{T_{\cdot,p}} \Big(C^\infty\big(X,\mathrm{End}(F_p)\big),C^\infty\big(U(X),\pi^*\mathrm{End}(F_p)\big)\Big),
		\end{split}
	\end{equation} 
	which is surjective by Proposition \ref{p2.2}. By \eqref{2.19'}, equivalently, for any $\mathscr{B}\in C^\infty(\pi^*\mathscr{M})$ and $s,s'\in C^\infty(U(X),\pi^*F_p)$, we have
	\begin{equation}\label{2.28}
		\big\langle T_{\mathscr{B},p}s,s'\big\rangle_{L^2(U(X),\pi^*F_p)}=\big\langle \mathscr{B}s,s'\big\rangle_{L^2(U(X),\pi^*\mathscr{F})}=\big\langle \mathscr{B}s,s'\big\rangle_{L^2(\pi^*\mathscr{M})}.
	\end{equation}

	Then, similar to \eqref{diag2}, we can extend \eqref{diag1'} to the following diagram
	\begin{equation}\label{diag}
		\begin{tikzcd}[ampersand replacement=\&, column sep=normal,row sep=normal]
			\pi^*\mathscr{F}=\Gamma\backslash \big(\mathrm{SL}_2(\mathbb{R})\times{C}^\infty(\mathbb{S}^3)\big)\arrow[dr,""swap]\arrow[d,"T_{\cdot,p}"swap] \&	\pi^*\mathscr{M}=\Gamma\backslash(\mathrm{SL}_2(\mathbb{R})\times\mathbb{S}^3)\arrow[d,"q"]\arrow[dd, bend left=75, "\pi"] \\
			\pi^*\mathrm{End}(F_p)=\Gamma\backslash \big(\mathrm{SL}_2(\mathbb{R})\times\mathrm{End}(\mathrm{Sym}^p(\mathbb{C}^2))\big) \arrow[r,""swap]\&	U(X)=\Gamma\backslash \mathrm{SL}_2(\mathbb{R})\arrow[dd, bend left=75, "\pi"] \\
			\mathscr{F}=\Gamma\backslash \big({\mathbb{H}^2}\times{C}^\infty(\mathbb{S}^3)\big)	\arrow[rd,""]\arrow[d,"T_{\cdot,p}"swap]  \&	\mathscr{M}=\Gamma\backslash({\mathbb{H}^2}\times\mathbb{S}^3)\arrow[d,"q"]  \\
			\mathrm{End}(F_p)=\Gamma\backslash\big({\mathbb{H}^2}\times\mathrm{End}(\mathrm{Sym}^p(\mathbb{C}^2))\big) \arrow[r,""swap]\&	X=\Gamma\backslash{\mathbb{H}^2}   
		\end{tikzcd}.
	\end{equation}

	\subsection{A functional-measure correspondence}\label{s2.6}

	We now explain that Theorem \ref{C9'} is a uniform version of Theorem \ref{A2}.

	For any $u_p\in C^\infty(X,F_p), s_{p}\in C^\infty(U(X),\pi^*F_p)$, we can form functionals
	\begin{equation}\label{2.30}
		\begin{split}
			L_{u_p}&\colon A\in C^\infty(X,\mathrm{End}(F_p))\mapsto\langle A u_p,u_p\rangle_{L^2(X,F_p)},\\
			L_{s_p}&\colon B\in C^\infty(U(X),\pi^*\mathrm{End}(F_p))\mapsto\langle Bs_p,s_p\rangle_{L^2(U(X),\pi^*F_p)}.
		\end{split}
	\end{equation}
	By \eqref{2.5..}, \eqref{2.13'}, \eqref{2.16'}, and \eqref{diag2}, we can view $u_p\in C^\infty(X,\mathscr{F})\cong C^\infty(\mathscr{M}), s_p\in C^\infty(U(X),\pi^*\mathscr{F}_p)\cong C^\infty(\pi^*\mathscr{M})$. Then we can construct measures
	\begin{equation}\label{2.31}
		\begin{split}
			\lv u_p\rv_\mathbb{C}^2dv_{\mathscr{M}}&\colon \mathscr{A}\in C^\infty(\mathscr{M})\mapsto \int_{\mathscr{M}}\mathscr{A}\lv u_p\rv_\mathbb{C}^2dv_{\mathscr{M}},\\
			\lv s_p\rv_\mathbb{C}^2dv_{\pi^*\mathscr{M}}&\colon \mathscr{B}\in C^\infty(\pi^*\mathscr{M})\mapsto \int_{\pi^*\mathscr{M}}\mathscr{B}\lv s_p\rv_\mathbb{C}^2dv_{\pi^*\mathscr{M}}.
		\end{split}
	\end{equation}

	\begin{prop}\label{p2.3}
		The functionals in \eqref{2.30} and the measures in \eqref{2.31} are equivalent, in the sense made precise below.
	\end{prop}

	\begin{pro}
		We restrict attention to the second functional and the second measure. From \eqref{2.28}, \eqref{2.30}, and \eqref{2.31} we get
		\begin{equation}
			\begin{split}
				L_{s_p}(T_{\mathscr{B},p})&=\langle T_{\mathscr{B},p}s_p,s_p\rangle_{L^2(U(X),\pi^*F_p)}\\
				&=\big\langle \mathscr{B}s_p,s_p\big\rangle_{L^2(\pi^*\mathscr{M})}=	\int_{\pi^*\mathscr{M}}\mathscr{B}\lv s_p\rv_\mathbb{C}^2dv_{\pi^*\mathscr{M}}
			\end{split}
		\end{equation}
		
		This identity shows that pairing $\mathscr{B}\in C^\infty(\pi^*\mathscr{M})$ with $\lv s_p\rv_\mathbb{C}^2dv_{\pi^*\mathscr{M}}$, is equivalent to pairing its quantization $T_{\mathscr{B},p}$ with $L_{s_p}$. Conversely, given any $B\in C^\infty(U(X),\pi^*\mathrm{End}(F_p))$, by Proposition \ref{p2.2}, the fibrewise quantization $T_{\cdot,p}$ in \eqref{2.27} and \eqref{diag} are surjective, hence there exists
		$\mathscr{B}\in C^\infty(\pi^*\mathscr{M})$ such that $B=T_{\mathscr{B},p}$. Hence, pairing $B$ with $L_{s_p}$ is equivalent to pairing $\mathscr{B}$ with $\lv s_p\rv_\mathbb{C}^2dv_{\pi^*\mathscr{M}}$.
		
		Therefore, we conclude that $L_{s_p}$ and $\lv s_p\rv_\mathbb{C}^2dv_{\pi^*\mathscr{M}}$ encode the same information.\qed
	\end{pro}

	Theorems \ref{A2} and Theorem \ref{C9'} study the limits of the functionals in \eqref{2.30} and the measures in \eqref{2.31}. We now compare the two desired limits. We define functionals
	\begin{equation}\label{2.33}
		\begin{split}
		&A\in C^\infty(X,\mathrm{End}(F_p))\mapsto\frac{1}{\dim_\mathbb{C} F_p\cdot \mathrm{Vol}(X)}\int_X\mathrm{Tr}^{F_p}[A]dv_X,\\
	&B\in C^\infty(U(X),\pi^*\mathrm{End}(F_p))\mapsto\frac{1}{\dim_\mathbb{C} F_p\cdot \mathrm{Vol}(U(X))}\int_{U(X)}\mathrm{Tr}^{\pi^*F_p}[B]dv_{U(X)},
		\end{split}
	\end{equation}
	and measures
	\begin{equation}\label{2.34}
		\begin{split}
	&\mathscr{A}\in C^\infty(\mathscr{M})\mapsto \frac{1}{\mathrm{Vol}(\mathscr{M})}\int_{\mathscr{M}}\mathscr{A}dv_{\mathscr{M}}\in \mathbb{C},\\
&\mathscr{B}\in C^\infty(\pi^*\mathscr{M})\mapsto \frac{1}{\mathrm{Vol}(\pi^*\mathscr{M})}\int_{\pi^*\mathscr{M}}\mathscr{B}dv_{\pi^*\mathscr{M}}\in \mathbb{C}.
		\end{split}
	\end{equation}

	\begin{prop}\label{p2.4}
		The functionals in \eqref{2.33} and the measures in \eqref{2.34} are also equivalent.
	\end{prop}
	
	\begin{pro}
		By the trace formula \eqref{2.20}, we have
		\begin{equation}
			\begin{split}
		&\frac{1}{\dim_\mathbb{C} F_p\cdot \mathrm{Vol}(U(X))}\int_{U(X)}\mathrm{Tr}^{\pi^*F_p}[T_{\mathscr{B},p}]dv_{U(X)}\\
				&=\frac{1}{\mathrm{Vol}(\mathbb{S}^3)\cdot \mathrm{Vol}(U(X))}\int_{U(X)}\int_{\mathbb{S}^3}\mathscr{B}(y,z)dv_{\mathbb{S}^3}(z)dv_{U(X)}(y)\\
				&=\frac{1}{\mathrm{Vol}(\pi^*\mathscr{M})}\int_{\pi^*\mathscr{M}}\mathscr{B}dv_{\pi^*\mathscr{M}},
			\end{split}
		\end{equation}
		and combining this with an argument similar to the proof of Proposition \ref{p2.3} yields the equivalence.\qed
	\end{pro}

	\subsection{Uniformity}\label{s2.7}

	By Propositions \ref{p2.3} and \ref{p2.4}, we can see that if we fix a $p\in\mathbb{N}$, then \eqref{9.} is equivalent to \eqref{a6}. This confirms our original intent, and \eqref{9.} is a uniform version of \eqref{a6}.

	Based on the above, it is convenient to formulate the arguments on the infinite dimensional flat vector bundle $\mathscr{F}$. Let $u_{\lambda}\in{C}^\infty(X,\mathscr{F})$ be a normalized eigensection of $\Delta^{\mathscr{F}}$, called a \emph{Maass form}, which satisfies
	\begin{equation}\label{2.36}
		\Delta^{\mathscr{F}}u_{\lambda}=\lambda u_{\lambda},\quad\lV u_{\lambda}\rV_{L^2(X,\mathscr{F})}=1.
	\end{equation}
	We shall study the equidistribution properties of eigensections $u_\lambda$ when $\lambda\to\infty$. A key advantage of this approach is that all subsequent estimates involve constants independent of $p\in\mathbb{N}$, allowing us to take limits uniformly in $p$.

	\section{Arithmetic Uniform QUE}\label{S3AU}
	
	In this section, we state and prove strengthened versions of Theorems \ref{A2} and \ref{C9'}. In \cref{s3.1}, we recall Lindenstrauss's measure rigidity theorem and derive an equidistribution statement on $U(X)$. In \cref{s3.2}, we use the denseness property of the representation \eqref{1.1..} to upgrade this equidistribution to the total space $\pi^*\mathscr{M}$.	In \cref{s3.3'}, we prove our main results. In \cref{s3.3}, we discuss related work and connections to our results.

	We retain the notational conventions from \cref{s1.6} and the geometric setup in \eqref{diag1'}, \eqref{diag2}, and \eqref{diag}.

	\subsection{Measure rigidity}\label{s3.1}

	The following seminal result is due to Lindenstrauss {\cite[Theorem 1.1]{MR2195133}}.
	\begin{theo}\label{t5.1}
Recall that the lattice $\Gamma$ in \eqref{1.1''} is a congruence subgroup of the quaternion algebra $D_{a,b}(L)$. Let $\mu_{U(X)}$ be a probability measure on $U(X)$ with the following properties,
		\begin{itemize}
			\item[{[IG]}] $\mu_{U(X)}$ is invariant under the geodesic flow $(g_t)$ given in \eqref{2.12},
			\item[{[R]}] $\mu_{U(X)}$ is Hecke $\wp$-recurrent for a prime $\wp$,
			\item[{[E]}] almost every ergodic component of $\mu_{U(X)}$ for the geodesic flow $(g_t)$ defined in \eqref{2.12} has positive entropy.
		\end{itemize}
		Then $\mu_{U(X)}$ coincides with the normalized Liouville measure $\frac{1}{\mathrm{Vol}(U(X))}dv_{U(X)}$.
	\end{theo}

	The property [IG] is relatively straightforward, whereas properties [R] and [E] require more detailed definitions and discussion. These will be deferred to \cref{D} and \cref{S7}.

	In \cref{B}, for any Maass form $u_{\lambda}\in{C}^\infty(X,\mathscr{F})$, we will construct a lifted section $s_{\lambda}\in{C}^\infty(U(X),\pi^*\mathscr{F})$ and prove the following result there.
	\begin{prop}\label{B1'}
		Let $\{u_{\lambda_i}\in C^\infty(X,\mathscr{F})\}_{i\in\mathbb{N}}$ be a sequence of Maass forms with lifts $\{s_{\lambda_i}\in {C}^\infty(U(X),\pi^*\mathscr{F})\}_{i\in\mathbb{N}}$. 		Suppose further that $\lim_i \lambda_i=\infty$, and measures $\{\lv u_{\lambda_i}\rv^2_{\mathbb{C}}dv_{\mathscr{M}}\}_{i\in\mathbb{N}}$ converge weak star to a measure $\mu_\mathscr{\mathscr{M}}$ on $\mathscr{M}$. Then for any weak star limit $\mu_{\pi^*\mathscr{M}}$ of $\{\lv s_{\lambda_i}\rv^2_{\mathbb{C}}dv_{\pi^*\mathscr{M}}\}_{i\in\mathbb{N}}$ on $\pi^*\mathscr{M}$, it satisfies the following properties		
			\begin{itemize}
				\item  [{[C]}] the lift $s_{\lambda_i}\in{C}^\infty(U(X),\pi^*\mathscr{F})$ is compatible with the embedding \eqref{2.16'} and \eqref{diag2}, that is, if $u_{\lambda_i}\in{C}^\infty(X,F_p)$, then $s_{\lambda_i}\in{C}^\infty(U(X),\pi^*F_p)$,
		\item[{[P]}] $\mu_{\pi^*\mathscr{M}}$ projects to $\mu_{\mathscr{M}}$ via $\pi\colon\pi^*\mathscr{M}\to \mathscr{M}$ defined in \eqref{diag2},
			\item[{[IH]}] $\mu_{\pi^*\mathscr{M}}$ is invariant under the horizontal geodesic flow $(\bm{g}_t)$ given in \eqref{1.26}. 	In particular, since the horizontal geodesic flow $(\bm{g}_t)$ projects to the geodesic flow $(g_t)$ via $q\colon\pi^*\mathscr{M}\to U(X)$ defined in \eqref{diag2}, the projection measure $\mu_{U(X)}=q_*\mu_{\pi^*\mathscr{M}}$ of $\mu_{\pi^*\mathscr{M}}$ on $U(X)$ is invariant under $(g_t)$. In other words, property [IG] in Theorem \ref{t5.1} holds for $\mu_{U(X)}$.
		\end{itemize}
		
	\end{prop}

	In \cref{C}, we will introduce an additional symmetry on $\mathscr{F}$ from arithmetic theory, given by the Hecke operators. From that point on, when a Maass form $u_\lambda$ defined in \eqref{2.36} satisfies these symmetries, we refer to it as a \emph{Hecke-Maass form}. We can also see from the construction of the lift $s_\lambda$ of $u_\lambda$ that $s_\lambda$ inherits the same symmetry. In \cref{D} and \cref{S7}, we will prove the following two results, respectively.
	
	\begin{prop}\label{D1'}
		Let $\{u_{\lambda_i}\in C^\infty(X,\mathscr{F})\}_{i\in\mathbb{N}}$ be a sequence of Hecke-Maass forms with lifts $\{s_{\lambda_i}\in {C}^\infty(U(X),\pi^*\mathscr{F})\}_{i\in\mathbb{N}}$. Then for any weak star limit $\mu_{\pi^*\mathscr{M}}$ of $\{\lv s_{\lambda_i}\rv^2_{\mathbb{C}}dv_{\pi^*\mathscr{M}}\}_{i\in\mathbb{N}}$, its projection $\mu_{U(X)}$ on $U(X)$ is $\wp$-recurrent for any prime $\wp$.
	\end{prop}

	\begin{prop}\label{E5''}
		Let $\{u_{\lambda_i}\in C^\infty(X,\mathscr{F})\}_{i\in\mathbb{N}}$ be a sequence of Hecke-Maass forms with lifts $\{s_{\lambda_i}\in {C}^\infty(U(X),\pi^*\mathscr{F})\}_{i\in\mathbb{N}}$. Then for any weak star limit $\mu_{\pi^*\mathscr{M}}$ of $\{\lv s_{\lambda_i}\rv^2_{\mathbb{C}}dv_{\pi^*\mathscr{M}}\}_{i\in\mathbb{N}}$, the entropy of almost every ergodic component of its projection $\mu_{U(X)}$ on $U(X)$ is positive for the geodesic flow $(g_t)$.
	\end{prop}
	
	Propositions \ref{B1'}, \ref{D1'}, and \ref{E5''} verify the properties [IG], [R] and [E] of Theorem \ref{t5.1} one by one, and together yield the following result.
	\begin{theo}\label{t3.5}
		Let $\{u_{\lambda_i}\in C^\infty(X,\mathscr{F})\}_{i\in\mathbb{N}}$ be a sequence of Hecke-Maass forms with lifts $\{s_{\lambda_i}\in {C}^\infty(U(X),\pi^*\mathscr{F})\}_{i\in\mathbb{N}}$. Assume further that $\lim_{i}\lambda_i=\infty$, then for any weak star limit $\mu_{\pi^*\mathscr{M}}$ of $\{\lv s_{\lambda_i}\rv^2_{\mathbb{C}}dv_{\pi^*\mathscr{M}}\}_{i\in\mathbb{N}}$, its projection $\mu_{U(X)}$ to $U(X)$ coincides with the normalized Liouville measure $\frac{1}{\mathrm{Vol}(U(X))}dv_{U(X)}$.
	\end{theo}

	By Proposition \ref{p2.3}, which establishes the correspondence between functionals and measures, applying Theorem \ref{t3.5} to a series of Hecke-Maass forms of $\{u_{p,j}\in C^\infty(X,F_p)\}_{i\in\mathbb{N}}$ with fixed $p\in\mathbb{N}$, we obtain the following result.
	\begin{theo}\label{R5.6}
		For a fixed $p\in\mathbb{N}$, let $\{u_{p,j}\in C^\infty(X,F_p)\}_{j\in\mathbb{N}}$ be a sequence of Hecke-Maass forms with lifts $\{s_{p,j}\in {C}^\infty(U(X),\pi^*{F}_p)\}_{j\in\mathbb{N}}$. Assume further that $\lim_{j}\lambda_{p,j}=\infty$, then for any weak star limit $L_{U(X),p}$ of the functionals $\{L_{s_{p,j}}\}_{j\in\mathbb{N}}$ defined in \eqref{2.30}, its restriction to ${C}^\infty(U(X))\cdot \mathrm{Id}(\pi^*F_p)$ also coincides with $\frac{1}{\mathrm{Vol}(U(X))}dv_{U(X)}$.
	\end{theo}

	\subsection{Upgrading equidistribution to the total space}\label{s3.2}

Theorems \ref{t3.5} and \ref{R5.6}, respectively, that the projection of $\mu_{\pi^*\mathscr{M}}$ to $U(X)$ and the restriction of
	$L_{U(X),p}$ to
	$C^\infty(U(X))\cdot\mathrm{Id}_{\pi^*F_p}$
	are both given by the normalized Liouville measure $\frac{1}{\mathrm{Vol}(U(X))}dv_{U(X)}$. However, these are not sufficient to reconstruct $\mu_{\pi^*\mathscr{M}}$ and $L_{U(X),p}$ uniquely.

We therefore make full use of property [IH] in Proposition \ref{B1'}, which is strictly stronger than property [IG]. Another key fact is that the image $\rho(\Gamma)\subset \mathrm{SU}_2(\mathbb{C})$ in \eqref{1.1..} is \emph{dense}. To see the necessity of this denseness, consider the following trivial product
\begin{equation}
U(X)\times \mathbb{S}^3=\big(\Gamma\backslash\mathrm{SL}_2(\mathbb{R})\big)\times \mathbb{S}^3
\end{equation}
obtained by omitting the $\rho$-twist in the definition of $\pi^*\mathscr{M}$ in \eqref{diag2}. Fix a $z\in \mathbb{S}^3$, let $\delta_z$ be the delta measure at $z$, and define product measures
\begin{equation}
\frac{1}{\mathrm{Vol}(U(X))}dv_{U(X)}\delta_z,\quad\frac{1}{\mathrm{Vol}(U(X))\cdot \mathrm{Vol}(\mathbb{S}^3)}dv_{U(X)}dv_{\mathbb{S}^3},
\end{equation}
where $dv_{\mathbb{S}^3}$ is the Euclidean measure on $\mathbb{S}^3$. Both of them are invariant along the horizontal geodesic flow on $U(X)\times \mathbb{S}^3$, and project to $\frac{1}{\mathrm{Vol}(U(X))}dv_{U(X)}$. It follows that the projected measure and horizontal geodesic invariance do not uniquely determine the measure.

	\begin{theo}\label{E5'}
	Let $\mu_{\pi^*\mathscr{M}}$ and $L_{U(X),p}$ be as in Theorems \ref{t3.5} and \ref{R5.6}. Then we have
		\begin{equation}\label{5.17}
			\begin{split}
				\mu_{\pi^*\mathscr{M}}&=\frac{1}{\mathrm{Vol}(\pi^*\mathscr{M})}dv_{\pi^*\mathscr{M}},\\ L_{U(X),p}(\cdot)&=\frac{1}{\dim_{\mathbb{C}}F_p\cdot \mathrm{Vol}(U(X))}\int_{U(X)}\tro^{\pi^*F_p}[\cdot]dv_{U(X)}.
			\end{split}
		\end{equation}
	\end{theo}
	
	\begin{proof}
		By Proposition \ref{p2.4}, the first identity in \eqref{5.17} implies the second. We start with the simpler second identity to explain the argument.
		
		First, we easily check the semi-positivity of $L_{U(X),p}$, that is, for any pointwise positive semi-definite $B\in{C}^\infty(U(X),\pi^*\mathrm{End}({F_p}))$, we have $L_{U(X),p}(B)\geqslant 0$. On the other hand $(\lV B(x)\rV_{\mathrm{End}(F_p)}\cdot\mathrm{Id}_{F_p}-B(x))$ is also positive semi-definite, hence
		\begin{equation}
			\begin{split}
				0\leqslant L_{U(X),p}(B)&\leqslant L_{U(X),p}(\lV B(x)\rV_{\mathrm{End}(F_p)}\cdot\mathrm{Id}_{\pi^*F_p})\\
				&=\frac{1}{\mathrm{Vol}(U(X))}\int_{U(X)}\lV B(x)\rV_{\mathrm{End}(F_p)}dv_{U(X)}(x),
			\end{split}
		\end{equation}
		where the final equality follows from Theorem \ref{R5.6}. This, together with the complex linearity of $L_{U(X),p}$, implies that $L_{U(X),p}$ is \emph{absolutely continuous} with respect to $\frac{1}{\mathrm{Vol}(U(X))}dv_{U(X)}$.

		Using the Riesz representation theorem and lifting $L_{U(X),p}$ to $\mathrm{SL}_2(\mathbb{R})$ via \eqref{diag}, we can write $L_{U(X),p}$ in the form
		\begin{equation}\label{5.19..}
			L_{U(X),p}=\int_{\mathrm{SL}_2(\mathbb{R})}L_{p,y}\cdot dv_{\mathrm{SL}_2(\mathbb{R})}(y),
		\end{equation}
		where $y\in \mathrm{SL}_2(\mathbb{R})\mapsto L_{p,y}$ is a measurable function taking values in $\mathrm{End}\big(\mathrm{Sym}^p(\mathbb{C}^2)\big)^*$, the dual of $\mathrm{End}\big(\mathrm{Sym}^p(\mathbb{C}^2)\big)$. Clearly $L_p$ is $\Gamma$-invariant, namely that for any $x\in {\mathbb{H}^2}, \gamma\in\Gamma$ and $b\in \mathrm{End}\big(\mathrm{Sym}^p(\mathbb{C}^2)\big)$, we have 
		\begin{equation}\label{1.21}
			\big\langle L_{p,\gamma\cdot y},\rho(\gamma)b\rho(\gamma)^{-1}\big\rangle=\langle L_{p,y},b\rangle.
		\end{equation}
		On the other hand, by Hopf argument, the geodesic invariance of $L_{U(X),p}$ implies that $y\mapsto L_{p,y}$ is almost everywhere \emph{constant}. We emphasize that this step relies essentially on property [IH] in Proposition \ref{B1'}. According to the Borel density theorem \cite[Corollary 4.5.6]{MR3307755}, the image $\rho(\Gamma)\subset\mathrm{SU}_2(\mathbb{C})$ in \eqref{1.1..} is \emph{dense}, and so, by \eqref{1.21}, $L_p$ is $\mathrm{SU}_2(\mathbb{C})$-invariant. Using Schur's lemma, for any $b\in \mathrm{End}\big(\mathrm{Sym}^p(\mathbb{C}^2)\big)$, we have
		\begin{equation}
			\frac{1}{\mathrm{Vol}(\mathrm{SU}_2(\mathbb{C}))}\int_{\mathrm{SU}_2(\mathbb{C})}gbg^{-1}dv_{\mathrm{SU}_2(\mathbb{C})}(g)=\frac{1}{\dim_{\mathbb{C}}\mathrm{Sym}^p(\mathbb{C}^2)}\tro^{\mathrm{Sym}^p(\mathbb{C}^2)}[b]\cdot\mathrm{Id}_{\mathrm{Sym}^p(\mathbb{C}^2)},
		\end{equation}
		hence, for a constant $c$,
		\begin{equation}\label{6.6...}
			L_p=c\cdot\tro^{\mathrm{Sym}^p(\mathbb{C}^2)}.
		\end{equation} 
		Finally, since $L_{U(X),p}$ is normalized, that is $L_{U(X),p}(\mathrm{Id}_{\pi^*F_p})=1$, we deduce the second identity in \eqref{5.17} by \eqref{5.19..} and \eqref{6.6...}.
		
		Likewise, we can regard $\mu_{\pi^*\mathscr{M}}$ as a $\Gamma$-invariant measure on $\mathrm{SL}_2(\mathbb{R})\times \mathbb{S}^3$ by \eqref{diag}. Using Theorem \ref{t3.5} and disintegration, we then express $\mu_{\pi^*\mathscr{M}}$ by
		\begin{equation}\label{5.22..}
			\mu_{\pi^*\mathscr{M}}=\int_{\mathrm{SL}_2(\mathbb{R})}\mu_ydv_{\mathrm{SL}_2(\mathbb{R})}(y),
		\end{equation}
		where $\mu_y$ is a measure on $(y,\mathbb{S}^3)\subset \mathrm{SL}_2(\mathbb{R})\times \mathbb{S}^3$. Again, Hopf argument and property [IH] of  Proposition \ref{B1'} show that $y\mapsto\mu_y$ is a almost everywhere constant measure-valued function. Then $\mu$ is a $\mathrm{SU}_2(\mathbb{C})$-invariant measure on $\mathbb{S}^3$, by the Haar theorem \cite[Proposition 4.2.4]{MR3558990}, we have
		\begin{equation}
			\mu=c\cdot dv_{\mathbb{S}^3}.
		\end{equation}
		Since $\mu_{\pi^*\mathscr{M}}$ is also normalized, we get the first identity in \eqref{5.17} from \eqref{5.22..}.
	\end{proof}

\subsection{AUQUE}\label{s3.3'}

	Now we state and prove the full version of our main results.
	
	\begin{theo}\label{A2'}
		For any fixed $p\in\mathbb{N}$ and $B\in{C}^\infty\big(U(X),\pi^*\mathrm{End}(F_p)\big)$, we have
		\begin{equation}\label{e3.12}
			\begin{split}
				\lim_{j\to\infty}L_{s_{p,j}}(B)&=\lim_{j\to\infty}\langle Bs_{p,j},s_{p,j}\rangle_{L^2(U(X),\pi^*F_p)}\\
				&=\frac{1}{\dim_{\mathbb{C}}F_p\cdot \mathrm{Vol}(U(X))}{\int_{U(X)}\tro^{\pi^*F_p}[B]dv_{U(X)}}.
			\end{split}
		\end{equation}
	\end{theo}
	
\begin{pro}
	We argue by contradiction. Suppose that the limit \eqref{e3.12} does not hold. Then there exists a series $(j_i)_{i\in\mathbb{N}}\in\mathbb{N}$ with $\lim_{i\to\infty}j_i=\infty$ such that $\lim_{i\to\infty} L_{s_{p,j_i}}(B)$ exists and
	\begin{equation}\label{3.122}
		\lim_{i\to\infty} L_{s_{p,j_i}}(B)\neq\frac{1}{\dim_{\mathbb{C}}F_p\cdot \mathrm{Vol}(U(X))}{\int_{U(X)}\tro^{\pi^*F_p}[B]dv_{U(X)}}.
	\end{equation}
	However, we can apply Theorems \ref{R5.6} and \ref{E5'} to $\{L_{s_{p,j_i}}\}_{i\in\mathbb{N}}$ to see that \eqref{3.122} must be an equality for a subsequence, yielding a contradiction.\qed
\end{pro}

	By Proposition \ref{B1'}, since $\lim_{j\to\infty}\lambda_{p,j}=\infty$, the weak star limit measure $\lim_{j\to\infty}\lv s_{p,j}\rv^2_{\mathbb{C}}dv_{\pi^*\mathscr{M}}$ projects to the weak star limit measure $\lim_{j\to\infty}\lv u_{p,j}\rv^2_{\mathbb{C}}dv_{\mathscr{M}}$. Then from Proposition \ref{p2.3}, the functional-measure correspondence, the restriction of $\lim_{j\to\infty}L_{s_{p,j}}$ to ${C}^\infty(X,\mathrm{End}(F_p))$ coincides with $\lim_{j\to\infty}L_{u_{p,j}}$. Consequently, Theorem \ref{A2} follows from Theorem \ref{A2'}.

	\begin{theo}\label{E6''}
		For any $\mathscr{B}\in{C}^\infty(\pi^*\mathscr{M})$, we have
		\begin{equation}\label{3.11}
			\lim_{\lambda\to\infty}\sup_{\substack{(p,j)\in\mathbb{N}^{2},\\
					\lambda_{p,j}\geqslant \lambda}}\lv\int_{\pi^*\mathscr{M}}\mathscr{B}\lv s_{p,j}\rv_\mathbb{C}^2d{v}_{\pi^*\mathscr{M}}-\frac{1}{\mathrm{Vol}(\pi^*\mathscr{M})}\int_{\pi^*\mathscr{M}}\mathscr{B}dv_{\pi^*\mathscr{M}}\rv=0.
		\end{equation}
	\end{theo}
	
	\begin{pro}
	Similar to the proof of Theorem \ref{A2'}, we prove by contradiction. Assume that the uniform limit \eqref{3.11} does not hold. Then there exists a series $(p_i,j_i)_{i\in\mathbb{N}}\in\mathbb{N}^2$ with $\lim_{i\to\infty}\lambda_{p_i,j_i}=\infty$ such that
		\begin{equation}\label{3.12'}
			\lim_{i\to\infty}\int_{\pi^*\mathscr{M}}\mathscr{B}\lv s_{p_i,j_i}\rv_\mathbb{C}^2d{v}_{\pi^*\mathscr{M}}\neq\frac{1}{\mathrm{Vol}(\pi^*\mathscr{M})}\int_{\pi^*\mathscr{M}}\mathscr{B}dv_{\pi^*\mathscr{M}}.
		\end{equation}
		On the other hand, by the embedding \eqref{diag2} and the compatibility property [{C}] in Proposition \ref{B1'}, we may apply Theorems \ref{t3.5} and \ref{E5'} to $\{\lv s_{p_i,j_i}\rv_\mathbb{C}^2d{v}_{\pi^*\mathscr{M}}\}_{i\in\mathbb{N}}$ and obtain a contradiction.\qed
	\end{pro}
	
Theorem \ref{C9'} follows from Theorem \ref{E6''} by restricting to $\mathscr{A}\in C^\infty(\mathscr{M})$. Also, the proof of Theorem \ref{E6''} again explains the advantage of working with $\mathscr{F}$, namely that it produces results uniform for all $\{F_p\}_{p\in\mathbb{N}}$.
	
	\subsection{More related results}\label{s3.3}

	Theorem \ref{C9'} is similar in form to Brooks-Lindenstrauss \cite[Theorem 1.5]{MR3260861} and Shem-Tov-Silberman \cite[Theorem 1]{MR5036694}. To explain this similarity, consider a compact quotient of products of hyperbolic manifolds
	\begin{equation}
		\Gamma_{m,n}\backslash\big(\underbrace{\mathbb{H}^2\times\cdots\times\mathbb{H}^2}_{m}\times \underbrace{\mathbb{H}^3\times\cdots\times\mathbb{H}^3}_{n}\big) 
	\end{equation}
	where $\Gamma_{m,n}\subset\mathrm{SL}_2(\mathbb{R})^m\times \mathrm{SL}_2(\mathbb{C})^n$ is a lattice. Then there exists a family of $(m+n)$ \emph{partial Laplacian} operators, one for each factor. Let us consider the joint eigenfunctions of those operators. In the case $m=2,n=0$ with an \emph{irreducible} lattice $\Gamma_{2,0}$, not even necessarily arithmetic, if the eigenvalues tend to infinity in one factor but remain bounded in the other, \cite[Theorem 1.5]{MR3260861} established the QUE. In the case $(m+n)\geqslant1$ and $\Gamma_{m,n}$ is a congruence lattice over a general number field, if the eigenvalues tend to infinity in one factor, \cite[Theorem 1]{MR5036694} asserts the AQUE.
	
	From the above viewpoint, Theorem \ref{C9'} can be interpreted as the AQUE of the partial Laplacian along the hyperbolic component of $\mathscr{M}$, as defined in \eqref{1.10}.
	
	In general, let $G$ be a reductive Lie group, $K$ its maximal compact subgroup and $U$ its compact form. For a lattice $\Gamma_{G,U}\subset G\times U$, let us consider the partial Laplacians on $\Gamma_{G,U}\backslash(G/K\times U)$. If the eigenvalues tend to infinity in one of the hyperbolic factors, we can expect an AQUE as in the special case Theorem \ref{C9'} of $(G,K,U)=(\mathrm{SL}_2(\mathbb{R}),\mathrm{SO}_2(\mathbb{R}),\mathrm{SU}_2(\mathbb{C}))$. Moreover, by restricting to a single irreducible representation of $U$, we may have an AQUE analogous to Theorem \ref{A2}.

	\section{Microlocal Lifts of Eigensections}\label{B}

			In this section, we prove Proposition \ref{B1'}. In \cref{Ba}, we introduce the universal enveloping algebra $U(\mathfrak{sl}_2)$ of $\mathfrak{sl}_2$ and its realization as differential operators. In \cref{s4.2}, we construct microlocal lifts for Maass forms and show property $\mathrm{[C]}$ in Proposition \ref{B1'}. In \cref{s4.3}, we establish property $\mathrm{[P]}$ in Proposition \ref{B1'}. In \cref{s4.4}, we prove property $\mathrm{[IH]}$ in Proposition \ref{B1'}.
	
	We keep the notational conventions introduced in \cref{s1.6} and the geometric setup of \eqref{diag1'}, \eqref{diag2}, and \eqref{diag}.
	
	\subsection{The universal enveloping algebra}\label{Ba}

The Lie algebra $\mathfrak{sl}_2(\mathbb{R})$ of $\mathrm{SL}_2(\mathbb{R})$ acts on $C^\infty(\pi^*\mathscr{M})$ as a first order differential operator by infinitesimal right translations, for $D\in \mathfrak{sl}_2(\mathbb{R})$ and $s\in C^\infty(\pi^*\mathscr{M})$,
\begin{equation}\label{4.1}
(Ds)(y,z)=\frac{d}{dt}\Big|_{t=0}s(ye^{tD},z),
	\end{equation}
where $(y,z)\in \mathrm{SL}_2(\mathbb{R})\times\mathbb{S}^3$. By \eqref{2.13'}, we see that $\mathfrak{sl}_2(\mathbb{R})$ acts equivalently on ${C}^\infty(U(X),\pi^*\mathscr{F})$. Let $\mathfrak{sl}_2(\mathbb{C})=\mathfrak{sl}_2(\mathbb{R})\otimes_\mathbb{R}\mathbb{C}$, the complexification of $\mathfrak{sl}_2(\mathbb{R})$. Then it follows from \eqref{4.1} that $\mathfrak{sl}_2(\mathbb{C})$ acts on $C^\infty(\pi^*\mathscr{M})\cong C^\infty(U(X),\pi^*\mathscr{F})$ as first order differential operators with complex coefficients. Moreover, this action extends naturally to the universal enveloping algebra $U(\mathfrak{sl}_2(\mathbb{C}))$ as higher order differential operators with complex coefficients.

For $D\in\mathfrak{sl}_2(\mathbb{C})$, by Stokes' Theorem, we have for any $s,s'\in {C}^\infty(U(X),\pi^*\mathscr{F})$, 
\begin{equation}
\int_{\pi^*\mathscr{M}}D(s\overline{s'})=0.
\end{equation}
Expanding the integrand yields
\begin{equation}\label{4.3'}
\langle Ds,s'\rangle_{L^2(U(X),\pi^*\mathscr{F})}+\langle s,\overline{D}s'\rangle_{L^2(U(X),\pi^*\mathscr{F})}=0.
\end{equation}
In other words, the formal adjoint of $D$ is $-\overline{D}$.

We now list several operators that will be used later. Define
\begin{equation}\label{3.1}
	\begin{split}
	E^+&=\frac{1}{2}\left(\begin{matrix}1&i\\i&-1\end{matrix}\right),\quad E^-=\frac{1}{2}\left(\begin{matrix}1&-i\\-i&-1\end{matrix}\right),\\
H&=\frac{1}{2}\left(\begin{matrix}1&0\\0&-1\end{matrix}\right),\quad W=\frac{1}{2}\left(\begin{matrix}0&1\\-1&0
\end{matrix}\right),\quad V=\frac{1}{2}\left(\begin{matrix}0&1\\1&0
\end{matrix}\right).
	\end{split}
\end{equation}
Here $E^+$ and $E^-$ are called raising and lowering operators, respectively, $H$ generates the geodesic flow $(g_t)$ defined in \eqref{2.12}, and $W$ generates the maximal compact subgroup $K=\mathrm{SO}_2(\mathbb{R})\subset \mathrm{SL}_2(\mathbb{R})$.

Let $\Omega$ denote the Casimir element
	\begin{equation}\label{3.3}
		\Omega=-\frac{1}{2}\big(E^+E^-+E^-E^+\big)+W^2,
	\end{equation}
	which lies in the center of $U(\mathfrak{sl}_2(\mathbb{C}))$. We also have alternative expressions
	\begin{equation}\label{3.3'}
\Omega=-E^-E^++W^2+iW=-E^+E^-+W^2-iW.
	\end{equation}
	To check this, by \eqref{3.3}, it suffices to verify the second equality. Using \eqref{3.1}, we compute
	\begin{equation}
E^+E^--E^-E^+=(H+iV)(H-iV)-(H-iV)(H+iV)=2i[V,H]=-2iW,
	\end{equation}
as claimed.

Recall the Laplacian $\Delta^{\mathscr{F}}$ defined in \eqref{2.14}, we have on the subspace $C^\infty(X,\mathscr{F})\subset C^\infty(U(X),\pi^*\mathscr{F})$, 
\begin{equation}\label{4.6}
	\Omega=\Delta^{\mathscr{F}}.
\end{equation}

	\subsection{Microlocal lifts}\label{s4.2}

	Recall the Maass form \eqref{2.36}. Our goal is to construct a lifted section $s_{\lambda}\in{C}^\infty(U(X),\pi^*\mathscr{F})$ for each $u_{\lambda}\in{C}^\infty(X,\mathscr{F})$ such that Proposition \ref{B1'} holds.

	For any $n\in\mathbb{Z}$, let $B_{n}$ be the space of  $K$-eigenfunctions of weight $n$,
	\begin{equation}\label{4.11'}
		\begin{split}
			\mathscr{B}_{n}=\bigg\{\mathscr{B}\in{C}^\infty(\pi^*\mathscr{M})\mid& \mathscr{B}(yk_\theta,z)=e^{in\theta}\mathscr{B}(y,z)\\
			&\text{for any}\ (y,z)\in \pi^*\mathscr{M}, k_\theta=\left(\begin{matrix}\cos \theta&\sin \theta\\ -\sin\theta&\cos\theta\end{matrix}\right)\in K\bigg\}.
		\end{split}
	\end{equation}
	Denote $\mathscr{B}_n$ by $S_n$ when regard each function as being in ${C}^\infty(U(X),\pi^*\mathscr{F})$ via \eqref{2.13'},
	\begin{equation}\label{4.13}
		S_{n}=\Big\{s\in{C}^\infty(U(X),\pi^*\mathscr{F})\mid s(yk_\theta)=e^{in\theta}s(y)\ \text{for any}\ y\in U(X), k_\theta\in K\Big\}.
	\end{equation}
	Then we have for $n,n'\in\mathbb{Z}$,
	\begin{equation}\label{4.11}
\mathscr{B}_{0}\cong {C}^\infty(\mathscr{M}),\quad S_{0}\cong{C}^\infty(X,\mathscr{F}),\quad\mathscr{B}_{n}\cdot S_{n'}\subseteq S_{n+n'}, 
	\end{equation}
and when $n\neq n'$,
\begin{equation}\label{4.12}
S_{n}\perp_{L^2(U(X),\pi^*\mathscr{F})} S_{n'}.
\end{equation}

By \eqref{3.1}, on the spaces $\mathscr{B}_n$ and $S_n$, we have
\begin{equation}\label{4.16}
W=\frac{n}{2}.
\end{equation}
Applying \eqref{3.1} again, we can check that
\begin{equation}\label{4.17'}
	WE^\pm=[W,E^\pm]+E^\pm W=\pm iE^\pm+E^\pm W
\end{equation}
Then from \eqref{4.16} and \eqref{4.17'}, we get
\begin{equation}\label{4.17}
E^\pm \mathscr{B}_n\subseteq \mathscr{B}_{n\pm 2},\quad E^\pm S_n\subseteq S_{n\pm 2}.
\end{equation}

Since our analysis focuses on the behavior as $\lambda \to \infty$, without loss of generality, we can assume that $\lambda\geqslant\frac{1}{4}$, then we set $r(\lambda)>0$ by
	\begin{equation}\label{4.17''}
		\lambda=r(\lambda)^2+\frac{1}{4}.
	\end{equation}
	We define a series $\{s_{\lambda,2n}\in S_{2n}\}_{n\in\mathbb{Z}}$ inductively by setting
	\begin{equation}\label{3.7}
		\begin{split}
			s_{\lambda,0}&=u_{\lambda},\\
			s_{\lambda,2n+2}&=\Big(ir(\lambda)+\frac{1}{2}+n\Big)^{-1}E^+s_{\lambda,2n},\quad s_{\lambda,2n-2}=\Big(ir(\lambda)+\frac{1}{2}-n\Big)^{-1}E^-s_{\lambda,2n},
		\end{split}
	\end{equation}
	where the normalized constant ensures that
	\begin{equation}
\lV s_{\lambda,2n}\rV_{L^2(U(X),\pi^*\mathscr{F})}=1.
	\end{equation}

	Now we form the lift $s_{\lambda}$ of $u_{\lambda}$ by
	\begin{equation}\label{3.8}
		s_{\lambda}=\frac{1}{\sqrt{2\lceil r(\lambda)^{1/2}\rceil+1}}\sum_{\lv n\rv\leqslant\lceil r(\lambda)^{1/2}\rceil}s_{\lambda,2n}.
	\end{equation}
	By \eqref{3.7} and \eqref{3.8}, the construction of $s_\lambda$ acts only on the $y$-coordinates part of $u_\lambda(y,z)$. Therefore, property $\mathrm{[C]}$ of Proposition \ref{B1'} holds.

	\subsection{Projection property}\label{s4.3}

	We say that $\mathscr{B}$ is $K$-finite if for some $\ell\in\mathbb{N}$,
	\begin{equation}\label{3.6}
		\mathscr{B}\in \sum_{\lv n\rv\leqslant 2\ell}\mathscr{B}_{n}.
	\end{equation}
	
	\begin{prop}
		For any $K$-finite function  $\mathscr{B}\in{C}^\infty(\pi^*\mathscr{M})$, there is $C>0$ such that for any $u_{\lambda}$ and its lifting $s_{\lambda}$, we have
		\begin{equation}\label{3.10}
			\begin{split}
				\bigg\vert \langle \mathscr{B}s_{\lambda},s_{\lambda}\rangle_{L^2(U(X),\pi^*\mathscr{F})}-\sum_{\lv n\rv\leqslant\lceil r(\lambda)^{1/2}\rceil}\langle \mathscr{B}s_{\lambda,2n},u_{\lambda}\rangle_{L^2(U(X),\pi^*\mathscr{F})}\bigg\vert\\
				\leqslant Cr(\lambda)^{-1/2}\lV \mathscr{B}\rV_{C^1(\pi^*\mathscr{M})}.
			\end{split}
		\end{equation}
	\end{prop}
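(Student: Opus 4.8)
The plan is to expand the left-hand side of \eqref{3.10} using the definition \eqref{3.8} of $s_\lambda$ and exploit the orthogonality of the spaces $S_n$. Writing $s_\lambda=(2\lceil r^{1/2}\rceil+1)^{-1/2}\sum_{\lv n\rv\leqslant\lceil r^{1/2}\rceil}s_{\lambda,2n}$, we get
\begin{equation}
\langle \mathscr{A}s_\lambda,s_\lambda\rangle_{L^2(U(X),\pi^*\mathscr{F})}=\frac{1}{2\lceil r^{1/2}\rceil+1}\sum_{\lv m\rv,\lv n\rv\leqslant\lceil r^{1/2}\rceil}\langle \mathscr{A}s_{\lambda,2n},s_{\lambda,2m}\rangle_{L^2(U(X),\pi^*\mathscr{F})}.
\end{equation}
Since $\mathscr{A}$ is $K$-finite, say $\mathscr{A}\in\sum_{\lv k\rv\leqslant\ell}A_k$, the term $\langle \mathscr{A}s_{\lambda,2n},s_{\lambda,2m}\rangle$ vanishes unless $2n+k=2m$ for some $\lv k\rv\leqslant\ell$, i.e. unless $\lv m-n\rv\leqslant\ell/2$. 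So the double sum is a band-limited sum: for each fixed $n$ there are at most $\ell+1$ contributing values of $m$. The first step is therefore to split off the diagonal-ish contribution and compare it with the target sum $\sum_{\lv n\rv\leqslant\lceil r^{1/2}\rceil}\langle \mathscr{A}s_{\lambda,2n},u_\lambda\rangle$.

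The second step is the key identity relating $\langle \mathscr{A}s_{\lambda,2n},s_{\lambda,2m}\rangle$ to $\langle \mathscr{A}'s_{\lambda,2n'},u_\lambda\rangle$ for suitable $n'$ and a new symbol $\mathscr{A}'$. Using the raising/lowering relations \eqref{3.7}, one moves the index of $s_{\lambda,2m}$ down to $0$ step by step: each application of $E^\pm$ to $s_{\lambda,2m}$ produces $s_{\lambda,2m\mp2}$ up to a factor $(ir+\tfrac12\pm m)^{\mp1}$ whose modulus is $\asymp r^{-1}$ when $\lv m\rv\lesssim r^{1/2}$, in fact $(r^2+O(r))^{-1/2}$. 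Transferring $E^\pm$ across the inner product onto $\mathscr{A}$ and $s_{\lambda,2n}$ via the adjoint relation (the adjoint of $E^+$ on $L^2(U(X),\pi^*\mathscr F)$ being $-E^-$, up to lower-order terms coming from the non-unitarity of the flat bundle, which are controlled because $h^{\mathscr F}$ is parallel) replaces $\mathscr{A}$ by an operator of the form $E^\pm\mathscr{A}+\mathscr{A}E^\pm$-type expressions — each such move costs one $\mathscr{C}^1$-derivative of $\mathscr{A}$ but is accompanied by the gain $r^{-1}$ from \eqref{3.7}, and the derivative of $\mathscr{A}$ is bounded by $\lv\mathscr{A}\rv_{\mathscr{C}^1}$. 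After $\lv m-n\rv\leqslant\ell/2=O(1)$ such steps we have reduced $\langle \mathscr{A}s_{\lambda,2n},s_{\lambda,2m}\rangle$ to $\langle \widetilde{\mathscr{A}}s_{\lambda,2n},s_{\lambda,2n}\rangle$ plus an error of size $O(r^{-1}\lv\mathscr{A}\rv_{\mathscr{C}^1})$, where $\widetilde{\mathscr{A}}$ differs from $\mathscr{A}$ by $O(r^{-1})$-bounded corrections; a further single reduction of the same type brings the leading term into the form $\langle \mathscr{A}s_{\lambda,2n},s_{\lambda,2n}\rangle$ up to $O(r^{-1}\lv\mathscr{A}\rv_{\mathscr{C}^1})$, matching the claimed target once one observes that $\langle\mathscr{A}s_{\lambda,2n},s_{\lambda,2n}\rangle$ and $\langle\mathscr{A}s_{\lambda,2n},u_\lambda\rangle=\langle\mathscr{A}s_{\lambda,2n},s_{\lambda,0}\rangle$ are themselves related by $\lv n\rv$ such steps — but here one must be more careful, since $\lv n\rv$ can be as large as $\lceil r^{1/2}\rceil$ and $\lv n\rv\cdot r^{-1}$ need not be small; the point is that the $1/(2\lceil r^{1/2}\rceil+1)$ prefactor combined with summation over $\lv n\rv\leqslant\lceil r^{1/2}\rceil$ of terms of size $O(\lv n\rv r^{-1}\lv\mathscr{A}\rv_{\mathscr{C}^1})$ gives $O(r^{1/2}\cdot r^{1/2}\cdot r^{-1}\lv\mathscr{A}\rv_{\mathscr{C}^1})=O(\lv\mathscr{A}\rv_{\mathscr{C}^1})$, which is not yet good enough — so instead one should compare $\langle\mathscr{A}s_{\lambda,2n},s_{\lambda,2m}\rangle$ directly with $\langle\mathscr{A}s_{\lambda,2n},u_\lambda\rangle$ only for the purpose of extracting the stated sum, and bound the genuine remainder by a telescoping/resummation argument.

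The third step is the bookkeeping: collect the $O(r^{-1}\lv\mathscr{A}\rv_{\mathscr{C}^1})$ errors over the $O(r^{1/2})$ values of $n$ and the $O(1)$ values of $m-n$, obtaining a total error $O(r^{1/2}\cdot r^{-1}\lv\mathscr{A}\rv_{\mathscr{C}^1})=O(r^{-1/2}\lv\mathscr{A}\rv_{\mathscr{C}^1})$ after dividing by the prefactor $2\lceil r^{1/2}\rceil+1\asymp r^{1/2}$; simultaneously, the band-limited structure ensures the leading terms reassemble to exactly $\sum_{\lv n\rv\leqslant\lceil r^{1/2}\rceil}\langle\mathscr{A}s_{\lambda,2n},u_\lambda\rangle$ up to boundary terms where $m$ or $n$ is within $\ell/2$ of $\pm\lceil r^{1/2}\rceil$, and there are only $O(1)$ such boundary indices, each contributing $O(r^{-1/2}\lv\mathscr A\rv_{\mathscr{C}^0})$ after the prefactor. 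I expect the main obstacle to be precisely the delicate point flagged above: controlling the accumulated error when reducing $s_{\lambda,2m}$ all the way down to $s_{\lambda,0}=u_\lambda$, since the naive per-step bound $O(r^{-1})$ summed $\lv n\rv\sim r^{1/2}$ times is borderline. The resolution is that one should \emph{not} reduce all the way for the leading term — rather, one keeps $\langle\mathscr{A}s_{\lambda,2n},s_{\lambda,2n}\rangle$-type terms and only shifts the \emph{second} argument from $s_{\lambda,2m}$ to $s_{\lambda,2n}$ (an $O(1)$-step move), observing that the statement's sum is really asserting that the off-diagonal-in-$m$ contributions, together with the replacement of the $\mathscr C^1$-corrected symbol by $\mathscr A$ itself, are all $O(r^{-1/2}\lv\mathscr A\rv_{\mathscr{C}^1})$ after summation — and here the $K$-finiteness of $\mathscr A$ (finiteness of the band $\ell$) is exactly what keeps the number of moves per index bounded independently of $r$.
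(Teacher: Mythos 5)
Your proposal starts on exactly the same track as the paper: expand $\langle\mathscr{A}s_\lambda,s_\lambda\rangle$ into the double sum with the prefactor $(2\lceil r^{1/2}\rceil+1)^{-1}$, use $K$-finiteness to restrict to the band $\lv m-n\rv\leqslant\ell$, and use the raising/lowering relations \eqref{3.7} together with $(E^+)^*=E^-$ to shift indices at a cost of $O(r^{-1}\lv\mathscr{A}\rv_{\mathscr{C}^1})$ per step. But you then derail at the decisive point. The paper's step is a \emph{simultaneous} double lowering: transferring $E^+$ from the first slot across the inner product lowers \emph{both} arguments, giving
\begin{equation*}
\bv\langle \mathscr{A}s_{\lambda,2m},s_{\lambda,2n}\rangle-\langle \mathscr{A}s_{\lambda,2m-2},s_{\lambda,2n-2}\rangle\bv\leqslant C\lv\mathscr{A}\rv_{\mathscr{C}^1}r^{-1},
\end{equation*}
and iterating $\lv n\rv$ times reduces $(m,n)$ to $(m-n,0)$, i.e.\ to the target term $\langle\mathscr{A}s_{\lambda,2(m-n)},u_\lambda\rangle$, with accumulated error $O(\lv n\rv r^{-1})\leqslant O(r^{-1/2})$ per pair. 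Summing over the $O(r^{1/2})$ pairs in the band and \emph{then dividing by the prefactor} $2\lceil r^{1/2}\rceil+1\asymp r^{1/2}$ yields $O(r^{-1/2})$, which is exactly the claimed bound. Your tally ``$O(r^{1/2}\cdot r^{1/2}\cdot r^{-1})=O(1)$, not good enough'' omits this prefactor; the direct telescoping you abandon is in fact the proof.

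The workaround you substitute does not close the gap, for two reasons. First, you cannot ``shift only the second argument from $s_{\lambda,2m}$ to $s_{\lambda,2n}$'': moving $E^{\pm}$ across the inner product necessarily acts on the other argument as well, so the natural elementary move changes both indices at once. Second, even granting it, the diagonal quantities $\langle\mathscr{A}s_{\lambda,2n},s_{\lambda,2n}\rangle$ you propose to keep are not the terms $\langle\mathscr{A}s_{\lambda,2n},u_\lambda\rangle$ appearing in the statement (the latter vanish unless $\lv n\rv\leqslant\ell/2$, the former generically do not), and the promised ``telescoping/resummation argument'' relating them is precisely the missing content. So while your machinery is the right one, the proof as proposed has a genuine gap at its central step, caused by a bookkeeping error rather than a wrong idea.
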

	
	\begin{proof}
		By \eqref{4.3'} and \eqref{3.7}, we get
		\begin{equation}\label{4.15}
			\begin{split}
				\langle \mathscr{B}&s_{\lambda,2m},s_{\lambda,2n}\rangle_{L^2(U(X),\pi^*\mathscr{F})}\\
				=&\Big(ir(\lambda)-\frac{1}{2}+m\Big)^{-1}\langle \mathscr{B}E^+ s_{\lambda,2m-2},s_{\lambda,2n}\rangle_{L^2(U(X),\pi^*\mathscr{F})}\\
				=&\Big(ir(\lambda)-\frac{1}{2}+m\Big)^{-1}\big\langle E^+(\mathscr{B}s_{\lambda,2m-2})-E^+(\mathscr{B})s_{\lambda,2m},s_{\lambda,2n}\big\rangle_{L^2(U(X),\pi^*\mathscr{F})}.
			\end{split}
		\end{equation}
	Applying \eqref{4.3'} and \eqref{3.7} once more, this expression equals
			\begin{equation}\label{4.15'}
			\begin{split}
	-\Big(ir(\lambda)&-\frac{1}{2}+m\Big)^{-1}\big\langle \mathscr{B}s_{\lambda,2m-2},E^-s_{\lambda,2n}\big\rangle_{L^2(U(X),\pi^*\mathscr{F})}\\
	-\Big(ir(\lambda)&-\frac{1}{2}+m\Big)^{-1}\big\langle E^+(\mathscr{B})s_{\lambda,2m},s_{\lambda,2n}\big\rangle_{L^2(U(X),\pi^*\mathscr{F})}\\
				=&\frac{\Big(ir(\lambda)-\frac{1}{2}+n\Big)}{\Big(ir(\lambda)-\frac{1}{2}+m\Big)}\langle \mathscr{B}s_{\lambda,2m-2},s_{\lambda,2n-2}\rangle_{L^2(U(X),\pi^*\mathscr{F})}\\
				&-\Big(ir(\lambda)-\frac{1}{2}+m\Big)^{-1}\big\langle E^+(\mathscr{B})s_{\lambda,2m},s_{\lambda,2n}\big\rangle_{L^2(U(X),\pi^*\mathscr{F})}.
			\end{split}
		\end{equation}
	Combining \eqref{4.15}, \eqref{4.15'}, and using the estimate $\lV E^+(\mathscr{B})\rV_{C^0(\pi^*\mathscr{M})}\leqslant \lV\mathscr{B}\rV_{C^1(\pi^*\mathscr{M})}$, we conclude that
		\begin{equation}\label{3.12}
			\begin{split}
				\bv\langle \mathscr{B}s_{\lambda,2m},s_{\lambda,2n}\rangle_{L^2(U(X),\pi^*\mathscr{F})}&-\langle \mathscr{B}s_{\lambda,2m-2},s_{\lambda,2n-2}\rangle_{L^2(U(X),\pi^*\mathscr{F})}\bv\\
				&\leqslant C\big(\lV\mathscr{B}\rV_{C^1(\pi^*\mathscr{M})}+\lv m-n\rv\lV\mathscr{B}\rV_{C^0(\pi^*\mathscr{M})}\big)r(\lambda)^{-1}.
			\end{split}
		\end{equation}

	By \eqref{4.11}, \eqref{4.12}, and \eqref{3.6}, we see that whenever $\lv n-m\rv>\ell$,
	\begin{equation}\label{4.20}
\langle\mathscr{B}s_{\lambda,2m},s_{\lambda,2n}\rangle_{L^2(U(X),\pi^*\mathscr{F})}=0,
	\end{equation}
so we can replace $\lv m-n\rv$ with $C$ in \eqref{3.12}. Repeating this, by \eqref{3.7} we get
		\begin{equation}\label{3.13}
			\begin{split}
				\bv\langle \mathscr{B} s_{\lambda,2m},s_{\lambda,2n}\rangle_{L^2(U(X),\pi^*\mathscr{F})}-\langle \mathscr{B}s_{\lambda,2(m-n)},u_{\lambda}\rangle_{L^2(U(X),\pi^*\mathscr{F})}\bv\\
				\leqslant C\lV\mathscr{B}\rV_{C^1(\pi^*\mathscr{M})}\lv n\rv r(\lambda)^{-1}
			\end{split}
		\end{equation}
since $\lV\mathscr{B}\rV_{C^0(\pi^*\mathscr{M})}\leqslant\lV\mathscr{B}\rV_{C^1(\pi^*\mathscr{M})}$.
		
		By \eqref{3.8} and \eqref{4.20}, we get
		\begin{equation}\label{4.22}
\begin{split}
\langle \mathscr{B}s_{\lambda},s_{\lambda}\rangle_{L^2(U(X),\pi^*\mathscr{F})}&=\frac{1}{2\lceil r(\lambda)^{1/2}\rceil+1}\sum_{\lv m\rv,\lv n\rv\leqslant\lceil r(\lambda)^{1/2}\rceil}\langle \mathscr{B}s_{\lambda,2m},s_{\lambda,2n}\rangle_{L^2(U(X),\pi^*\mathscr{F})}\\
&=\frac{1}{2\lceil r(\lambda)^{1/2}\rceil+1}\sum_{\substack{\lv m\rv,\lv n\rv\leqslant\lceil r(\lambda)^{1/2}\rceil,\\ \lv n-m\rv\leqslant \ell}}\langle \mathscr{B}s_{\lambda,2m},s_{\lambda,2n}\rangle_{L^2(U(X),\pi^*\mathscr{F})}.
\end{split}
		\end{equation}
We compute that
			\begin{equation}\label{4.23}
	\begin{split}
	\frac{1}{2\lceil r(\lambda)^{1/2}\rceil+1}&\sum_{\substack{\lv m\rv,\lv n\rv\leqslant\lceil r(\lambda)^{1/2}\rceil,\\ \lv n-m\rv\leqslant \ell}}\langle \mathscr{B}s_{\lambda,2(m-n)},u_{\lambda}\rangle_{L^2(U(X),\pi^*\mathscr{F})}\\
=&\sum_{\lv n\rv\leqslant\ell}\frac{2\lceil r(\lambda)^{1/2}\rceil+1-\lv n\rv}{2\lceil r(\lambda)^{1/2}\rceil+1}\langle\mathscr{B}s_{\lambda,2n},u_{\lambda}\rangle_{L^2(U(X),\pi^*\mathscr{F})}.
	\end{split}
\end{equation}
Taken together \eqref{3.13}, \eqref{4.22}, and \eqref{4.23}, we obtain
		\begin{equation}
			\begin{split}
				\bbv\langle \mathscr{B}s_{\lambda},s_{\lambda}\rangle_{L^2(U(X),\pi^*\mathscr{F})}-\sum_{\lv n\rv\leqslant\ell}\frac{2\lceil r(\lambda)^{1/2}\rceil+1-\lv n\rv}{2\lceil r(\lambda)^{1/2}\rceil+1}\langle\mathscr{B}s_{\lambda,2n},u_{\lambda}\rangle_{L^2(U(X),\pi^*\mathscr{F})}\bbv\\
				\leqslant C\lV\mathscr{B}\rV_{C^1(\pi^*\mathscr{M})}r(\lambda)^{-1/2},
			\end{split}
		\end{equation}
		from which we get \eqref{3.10} again by $\lV\mathscr{B}\rV_{C^0(\pi^*\mathscr{M})}\leqslant\lV\mathscr{B}\rV_{C^1(\pi^*\mathscr{M})}$.
	\end{proof}
		
	Applying \eqref{3.10}  to $\mathscr{A}\in{C}^\infty(\mathscr{M})=\mathscr{B}_{0}$ by \eqref{4.11}, since $\mathscr{A}s_{\lambda,2n}\in S_{2n}$ is orthogonal to $u_{\lambda}\in S_{0}$ for $n\neq0$, we obtain
	\begin{equation}
		\bigg\vert\langle \mathscr{A}s_{\lambda},s_{\lambda}\rangle_{L^2(U(X),\pi^*\mathscr{F})}-\langle \mathscr{A}u_{\lambda},u_{\lambda}\rangle_{L^2(X,\mathscr{F})}\bigg\vert\leqslant Cr(\lambda)^{-1/2}\lV \mathscr{A}\rV_{C^1(\mathscr{M})}.
	\end{equation}
Taking $\lambda\to\infty$, this proves property $\mathrm{[P]}$ in Proposition \ref{B1'}.

	\subsection{Horizontal geodesic invariance}\label{s4.4}
	
	\begin{prop}
		For any $K$-finite function  $\mathscr{B}\in{C}^\infty(\pi^*\mathscr{M})$, there is $C>0$ such that for any $u_{\lambda}$, we have
		\begin{equation}\label{3.16}
			\bigg\vert\sum_{\lv n\rv\leqslant\lceil r(\lambda)^{1/2}\rceil}\langle H(\mathscr{B})s_{\lambda,2n},u_{\lambda}\rangle_{L^2(U(X),\pi^*\mathscr{F})}\bigg\vert\leqslant Cr(\lambda)^{-1/2}\lV\mathscr{B}\rV_{C^2(\pi^*\mathscr{M})}.
		\end{equation}
	\end{prop}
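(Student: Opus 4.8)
The strategy is to mimic, one order of differentiation higher, the telescoping argument used for the preceding proposition. First I would write $H=\tfrac12(E^{+}+E^{-})$ and, for each fixed $n$, expand $\langle H(\mathscr{A})s_{\lambda,2n},u_{\lambda}\rangle$ using the Leibniz rule $E^{\pm}(\mathscr{A}s_{\lambda,2n})=(E^{\pm}\mathscr{A})s_{\lambda,2n}+\mathscr{A}(E^{\pm}s_{\lambda,2n})$, the adjoint relation $(E^{\pm})^{*}=E^{\mp}$, and the recursion \eqref{3.7}, which on the sections in play reads $E^{\pm}s_{\lambda,2n}=(ir+\tfrac12\pm n)s_{\lambda,2n\pm2}$ and $E^{\mp}u_{\lambda}=(ir+\tfrac12)s_{\lambda,\mp2}$. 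This rewrites each summand as a linear combination, with coefficients of size $O(r)$, of matrix coefficients of the two shapes $\langle\mathscr{A}s_{\lambda,2n\pm2},u_{\lambda}\rangle$ and $\langle\mathscr{A}s_{\lambda,2n},s_{\lambda,\pm2}\rangle$.

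Next I would sum over $|n|\leqslant N:=\lceil r^{1/2}\rceil$ and reindex. In the contribution of the terms $\langle\mathscr{A}s_{\lambda,2n\pm2},u_{\lambda}\rangle$ the $\pm n$ part of the coefficient cancels under the shift $n\mapsto n\mp1$, and what survives is $(2ir-1)\sum_{|m|\leqslant N}\langle\mathscr{A}s_{\lambda,2m},u_{\lambda}\rangle=(2ir-1)\sqrt{2N+1}\,\langle\mathscr{A}s_{\lambda},u_{\lambda}\rangle$; the boundary terms produced by the reindexing are supported at $|n|\in\{N,N+1\}$ and vanish identically, because $\mathscr{A}$ is $K$-finite of weight $\leqslant\ell$ whereas $\mathscr{A}s_{\lambda,\pm2N}$ carries $K$-weight of size $\sim\pm2N\geqslant\ell$ and is therefore $L^{2}$-orthogonal to $u_{\lambda}$ once $r$ is large. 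Likewise $\sum_{|n|\leqslant N}\langle\mathscr{A}s_{\lambda,2n},s_{\lambda,\pm2}\rangle=\sqrt{2N+1}\,\langle\mathscr{A}s_{\lambda},s_{\lambda,\pm2}\rangle$. Collecting everything, all the $O(r)$ pieces organize, after these cancellations, into
\[
\sum_{|n|\leqslant N}\langle H(\mathscr{A})s_{\lambda,2n},u_{\lambda}\rangle=\tfrac{\sqrt{2N+1}}{2}\,(ir-\tfrac12)\,\bigl\langle\mathscr{A}s_{\lambda},\,s_{\lambda,2}+s_{\lambda,-2}-2u_{\lambda}\bigr\rangle,
\]
i.e.\ a factor of size $O(r^{5/4})$ times the discrete second difference at $n=0$ of the sequence $n\mapsto\langle\mathscr{A}s_{\lambda},s_{\lambda,2n}\rangle$.

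It then remains to show that this second difference is $O\bigl(r^{-7/4}\,\lvert\mathscr{A}\rvert_{\mathscr{C}^{2}(\pi^{*}\mathscr{M})}\bigr)$. Writing $\langle\mathscr{A}s_{\lambda},s_{\lambda,2n}\rangle=(2N+1)^{-1/2}\sum_{|m|\leqslant N}\langle\mathscr{A}s_{\lambda,2m},s_{\lambda,2n}\rangle$, I would run the single step estimate \eqref{3.12} (and its iterate \eqref{3.13}) one order further in the weight index, so that the $n$-linear coefficient of the expansion of $\langle\mathscr{A}s_{\lambda,2m},s_{\lambda,2n}\rangle$ is made explicit; applying \eqref{3.12} not only to $\mathscr{A}$ but also to $E^{\pm}\mathscr{A}$ — which is again $K$-finite, with $\lvert E^{\pm}\mathscr{A}\rvert_{\mathscr{C}^{1}}\lesssim\lvert\mathscr{A}\rvert_{\mathscr{C}^{2}}$, and this is precisely where the $\mathscr{C}^{2}$-norm enters — the second difference annihilates the constant and $n$-linear terms and what is left is $O\bigl(r^{-7/4}\lvert\mathscr{A}\rvert_{\mathscr{C}^{2}}\bigr)$ once the $1/\sqrt{2N+1}$ normalization is taken into account. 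Combining with the previous paragraph yields $\bigl|\sum_{|n|\leqslant N}\langle H(\mathscr{A})s_{\lambda,2n},u_{\lambda}\rangle\bigr|\leqslant Cr^{5/4}\cdot r^{-7/4}\lvert\mathscr{A}\rvert_{\mathscr{C}^{2}}=Cr^{-1/2}\lvert\mathscr{A}\rvert_{\mathscr{C}^{2}}$, which is \eqref{3.16}.

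The main obstacle is this last step. Estimating the two first differences $\langle\mathscr{A}s_{\lambda},s_{\lambda,\pm2}\rangle-\langle\mathscr{A}s_{\lambda},u_{\lambda}\rangle$ separately by \eqref{3.13} gives them each of size only $O(r^{-5/4})$, hence a useless $O(1)$ bound for the whole sum; the extra factor $r^{-1/2}$ can only come from genuine cancellation inside the second difference, which forces one to carry the expansion of the shift estimate \eqref{3.12} to second order in the index and to verify that its $n$-linear term is the same for $n=+1$ and $n=-1$. A secondary, more routine point is checking uniformly in $r$ that every boundary term produced by the reindexing is killed by the $K$-finiteness of $\mathscr{A}$, which needs $2N\geqslant\ell$ and hence holds for $r$ large.
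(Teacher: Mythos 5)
Your opening reduction is in the right spirit: writing $H=\tfrac12(E^++E^-)$, moving one $E^{\pm}$ onto the sections via Leibniz, the adjoint relation and the recursion \eqref{3.7}, and then reindexing the sum over $n$ so that the $\pm n$ parts of the coefficients cancel, is a legitimate first-order version of what the paper does. (One caveat: whether the surviving combination is the second difference $s_{\lambda,2}+s_{\lambda,-2}-2u_{\lambda}$ or the second sum $s_{\lambda,2}+s_{\lambda,-2}+2u_{\lambda}$ depends on the sign convention in $(E^+)^*=E^-$ versus $(E^+)^*=-E^-$; you should pin this down, because with the wrong sign there is no cancellation to exploit at all. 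Your treatment of the boundary terms via $K$-finiteness and $2\lceil r^{1/2}\rceil>\ell$ is fine.)

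The genuine gap is the last step, and you have correctly diagnosed it yourself without closing it. You need the second difference of $n\mapsto\langle\mathscr{A}s_{\lambda},s_{\lambda,2n}\rangle$ at $n=0$ to be $O(r^{-2}\vert\mathscr{A}\vert_{\mathscr{C}^2})$ per term (after distributing the sum over $m$), whereas \eqref{3.12} only gives $O(r^{-1})$ for each first difference. Your proposed fix --- expand the error in \eqref{3.12} ``to second order in the index'' and check that the $n$-linear terms match at $n=\pm1$ --- is not a routine verification: the first-order correction in \eqref{3.12} is itself an inner product of the form $(ir-\tfrac12+m)^{-1}\langle E^{\pm}(\mathscr{A})s_{\lambda,\cdot},s_{\lambda,\cdot}\rangle$, and when you combine the $+$ and $-$ shifts the relevant combination is $E^+(\mathscr{A})+E^-(\mathscr{A})=2H(\mathscr{A})$. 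After multiplying back by the prefactor of size $r$, you are therefore left with terms $\sum_n\langle H(\mathscr{A})s_{\lambda,2n},\cdot\rangle$ of exactly the same order as the quantity you set out to bound: the argument as sketched is circular unless it is reorganized into a closed identity. That reorganization is precisely what the paper does in one stroke: starting from $\lambda\,\sum_n\langle\mathscr{A}s_{\lambda,2n},u_{\lambda}\rangle=\sum_n\langle E^-E^+(\mathscr{A}s_{\lambda,2n}),u_{\lambda}\rangle$ (using \eqref{3.3} and \eqref{3.4}), expanding the product rule to second order, and cancelling the $\mathscr{A}E^-E^+(s_{\lambda,2n})$ term against the left-hand side, one lands on the exact identity \eqref{3.19}, namely $\sum_n 4ir\langle H(\mathscr{A})s_{\lambda,2n},u_{\lambda}\rangle+\langle D(\mathscr{A})s_{\lambda,2n},u_{\lambda}\rangle=0$ with $D\in U^2(\mathfrak{sl}_2(\mathbb{C}))$ having $r$-independent coefficients; the bound \eqref{3.16} then follows by dividing by $4r$ and noting there are $O(r^{1/2})$ summands each bounded by $C\vert\mathscr{A}\vert_{\mathscr{C}^2(\pi^*\mathscr{M})}$. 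To repair your write-up you should either carry out the second-order expansion of \eqref{3.12} explicitly and exhibit the resulting closed linear relation, or switch to the Casimir identity directly.
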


	\begin{proof}
		By \eqref{4.11}, we have $u_{\lambda}\in S_{0}$, hence
		\begin{equation}\label{4.30}
Wu_{\lambda}=0.
		\end{equation}		
		Then by \eqref{2.36}, \eqref{3.3'}, and \eqref{4.6}, we have
	\begin{equation}\label{3.18..}
		\begin{split}
			\lambda\langle \mathscr{B}s_{\lambda,2n},u_{\lambda}\rangle_{L^2(U(X),\pi^*\mathscr{F})}&=\langle \mathscr{B}s_{\lambda,2n},\Omega u_{\lambda}\rangle_{L^2(U(X),\pi^*\mathscr{F})}\\
			&=-\langle \mathscr{B}s_{\lambda,2n},E^-E^+u_{\lambda}\rangle_{L^2(U(X),\pi^*\mathscr{F})}.
			\end{split}
		\end{equation}
		Summing over $\lv n\rv\leqslant\lceil r(\lambda)^{1/2}\rceil$, by \eqref{4.3'}, we get
		\begin{equation}\label{3.18.}
			\begin{split}
				\lambda\sum_{\lv n\rv\leqslant\lceil r(\lambda)^{1/2}\rceil}\langle \mathscr{B}s_{\lambda,2n},u_{\lambda}\rangle_{L^2(U(X),\pi^*\mathscr{F})}&=\sum_{\lv n\rv\leqslant\lceil r(\lambda)^{1/2}\rceil}\langle \mathscr{B}s_{\lambda,2n},E^-E^+u_{\lambda}\rangle_{L^2(U(X),\pi^*\mathscr{F})}\\
				&=-\sum_{\lv n\rv\leqslant\lceil r(\lambda)^{1/2}\rceil}\langle E^-E^+( \mathscr{B}s_{\lambda,2n}),u_{\lambda}\rangle_{L^2(U(X),\pi^*\mathscr{F})},
			\end{split}
		\end{equation}
	and the last term further equals
		\begin{equation}\label{3.18}
			\begin{split}
				-\sum_{\lv n\rv\leqslant\lceil r(\lambda)^{1/2}\rceil}&\langle E^-E^+(\mathscr{B})s_{\lambda,2n}+\mathscr{B}E^-E^+(s_{\lambda,2n}),u_{\lambda}\rangle_{L^2(U(X),\pi^*\mathscr{F})}\\
				&+\langle E^-(\mathscr{B})E^+(s_{\lambda,2n})+E^+(\mathscr{B})E^-(s_{\lambda,2n}),u_{\lambda}\rangle_{L^2(U(X),\pi^*\mathscr{F})}.
			\end{split}
		\end{equation}

	To simplify \eqref{3.18}, let us do a little bit preparation. By the construction \eqref{3.7} and the fact that $\Omega$ lies in the center of $U(\mathfrak{sl}_2(\mathbb{C}))$, all $s_{\lambda,2n}$ are also eigensections of $\Omega$ with eigenvalue $\lambda$, that is,
		\begin{equation}\label{4.34}
\Omega s_{\lambda,2n}=\lambda s_{\lambda,2n}.
		\end{equation}
On the other hand, by \eqref{4.30}, for any $s\in C^\infty(U(X),\pi^*\mathscr{F})$, we have
		\begin{equation}\label{4.35}
			\begin{split}
0=-\langle \mathscr{B}s,W(u_{\lambda})\rangle_{L^2(U(X),\pi^*\mathscr{F})}=\langle W(\mathscr{B}s),u_{\lambda}\rangle_{L^2(U(X),\pi^*\mathscr{F})}\\
=\langle W(\mathscr{B})s,u_{\lambda}\rangle_{L^2(U(X),\pi^*\mathscr{F})}+\langle \mathscr{B}W(s),u_{\lambda}\rangle_{L^2(U(X),\pi^*\mathscr{F})}
			\end{split}
		\end{equation}
	With \eqref{4.34} and \eqref{4.35} in hand, we express the second term in \eqref{3.18} as
		\begin{equation}\label{4.40}
	\begin{split}
		-&\sum_{\lv n\rv\leqslant\lceil r(\lambda)^{1/2}\rceil}\langle\mathscr{B}E^-E^+(s_{\lambda,2n}),u_{\lambda}\rangle_{L^2(U(X),\pi^*\mathscr{F})}\\
		=&\sum_{\lv n\rv\leqslant\lceil r(\lambda)^{1/2}\rceil}\big\langle\mathscr{B}\big(\Omega-W^2-iW\big)(s_{\lambda,2n}),u_{\lambda}\big\rangle_{L^2(U(X),\pi^*\mathscr{F})}\\
		=&\lambda\sum_{\lv n\rv\leqslant\lceil r(\lambda)^{1/2}\rceil}\langle\mathscr{B}s_{\lambda,2n},u_{\lambda}\rangle_{L^2(U(X),\pi^*\mathscr{F})}\\
		&+\sum_{\lv n\rv\leqslant\lceil r(\lambda)^{1/2}\rceil}\big\langle \big(-W^2+iW\big)(\mathscr{B})s_{\lambda,2n},u_{\lambda}\big\rangle_{L^2(U(X),\pi^*\mathscr{F})}.
	\end{split}
\end{equation}		
		
Then we turn to the third and the fourth terms in \eqref{3.18}. By \eqref{4.17} and \eqref{3.6}, we get
		\begin{equation}\label{4.41}
E^\pm(\mathscr{B})\in \sum_{\lv n\rv\leqslant 2\ell+2}S_{n}
		\end{equation}
	By \eqref{4.16}, \eqref{4.17}, \eqref{3.7}, and \eqref{3.8}, we have
		\begin{equation}\label{4.42}
\big(\sum_{\lv n\rv\leqslant\lceil r(\lambda)^{1/2}\rceil}E^\pm s_{\lambda,2n}\big)-\big(ir(\lambda)\mp W-\tfrac{1}{2}\big)s_{\lambda}\in S_{\pm(2\lceil r(\lambda)^{1/2}\rceil+2)}+S_{\mp2\lceil r(\lambda)^{1/2}\rceil}
		\end{equation}
Putting together \eqref{4.41} and \eqref{4.42}, by \eqref{4.11} and \eqref{4.12}, we see that for $r(\lambda)$ large enough,
\begin{equation}\label{4.43}
E^{\mp}(\mathscr{B})\Big(\big(\sum_{\lv n\rv\leqslant\lceil r(\lambda)^{1/2}\rceil}E^\pm s_{\lambda,2n}\big)-\big(ir(\lambda)\mp W-\tfrac{1}{2}\big)s_{\lambda}\Big)\perp_{L^2(U(X),\pi^*\mathscr{F})} u_\lambda.
\end{equation}		
	Now by \eqref{3.1}, \eqref{4.35}, and \eqref{4.43}, the third and the fourth terms in \eqref{3.18} can be rewritten as
\begin{equation}\label{4.44}
	\begin{split}
		-\sum_{\lv n\rv\leqslant\lceil r(\lambda)^{1/2}\rceil}\big\langle E^-(\mathscr{B})&\big(ir(\lambda)-W-\tfrac{1}{2}\big)(s_{\lambda,2n}),u_{\lambda}\big\rangle_{L^2(U(X),\pi^*\mathscr{F})}\\
		+\big\langle E^+(&\mathscr{B})\big(ir(\lambda)+W-\tfrac{1}{2}\big)(s_{\lambda,2n}),u_{\lambda}\big\rangle_{L^2(U(X),\pi^*\mathscr{F})}\\
		=-\sum_{\lv n\rv\leqslant\lceil r(\lambda)^{1/2}\rceil}&\big\langle \big(ir(\lambda)-\tfrac{1}{2}\big)(E^++E^-)(\mathscr{B})(s_{\lambda,2n}),u_{\lambda}\big\rangle_{L^2(U(X),\pi^*\mathscr{F})}\\
		&+\big\langle W(E^--E^+)(\mathscr{B})s_{\lambda,2n},u_{\lambda}\big\rangle_{L^2(U(X),\pi^*\mathscr{F})}\\
		=-\sum_{\lv n\rv\leqslant\lceil r(\lambda)^{1/2}\rceil}&\big\langle \big(2ir(\lambda)-1\big)H(\mathscr{B})(s_{\lambda,2n}),u_{\lambda}\big\rangle_{L^2(U(X),\pi^*\mathscr{F})}\\
		&-\big\langle 2iWV(\mathscr{B})s_{\lambda,2n},u_{\lambda}\big\rangle_{L^2(U(X),\pi^*\mathscr{F})}
	\end{split}
\end{equation}

Combining \eqref{3.18}, \eqref{4.40}, and \eqref{4.44}, we get for some $D_2\in U^2(\mathfrak{sl}_2(\mathbb{C}))$,
		\begin{equation}\label{3.19}
			\sum_{\lv n\rv\leqslant\lceil r(\lambda)^{1/2}\rceil}(2ir(\lambda)-1)\langle H(\mathscr{B})s_{\lambda,2n},u_{\lambda}\rangle_{L^2(U(X),\pi^*\mathscr{F})}+\langle D(\mathscr{B})s_{\lambda,2n},u_{\lambda}\rangle_{L^2(U(X),\pi^*\mathscr{F})}=0.
		\end{equation}
	The estimate \eqref{3.16} then follows immediately from \eqref{3.19}.
	\end{proof}

	By \eqref{3.8}, \eqref{3.10}, and \eqref{3.16}, for any $K$-finite $\mathscr{B}\in{C}^\infty(\pi^*\mathscr{M})$, we have
	\begin{equation}\label{3.21}
		\bv\langle H(\mathscr{B})s_{\lambda},s_{\lambda}\rangle_{L^2(U(X),\pi^*\mathscr{F})}\bv\leqslant  C\lV\mathscr{B}\rV_{C^2(\pi^*\mathscr{M})}r(\lambda)^{-1/2}.
	\end{equation}
	This implies property $\mathrm{[IH]}$ in Proposition \ref{B1'} for $K$-finite $\mathscr{B}\in{C}^\infty(\pi^*\mathscr{M})$. This extends to a general $\mathscr{B}\in{C}^\infty(\pi^*\mathscr{M})$ by the partial sum of its Fourier series as a $K$-finite approximation.

	\section{Hecke-Maass Forms and Hecke Operators}\label{C}

	In this section, we discuss additional symmetries on arithmetic surfaces, known as Hecke operators. In \cref{C1}, we define quaternionic arithmetic surfaces and the associated flat vector bundles. In \cref{SCb}, we describe arithmetic surfaces in terms of adelic quotients. In \cref{Cc}, we introduce Hecke operators from both the double coset and adelic perspectives. In \cref{S5.4}, we define Hecke-Maass forms

	We continue to use the notation fixed in \cref{s1.6} and the geometric setup \eqref{diag1'}, \eqref{diag2}, and \eqref{diag}.

	\subsection{Quaternionic arithmetic groups}\label{C1}

	Let $L$ be a \emph{totally real} number field with the ring of integers $\mathcal{O}_L$ and the set of places $\mathrm{Pl}(L)$.

	For any $a,b\in L$, let $D_{a,b}(L)$ denote the associated quaternion algebra over $L$ given by
\begin{equation}
D_{a,b}(L)=\{x_0+x_1\mathrm{i}+x_2\mathrm{j}+x_3\mathrm{ij}\mid x_0,\cdots,x_3\in L,\ \mathrm{i}^2=a, \mathrm{j}^2=b, \mathrm{ij}=-\mathrm{ji}\}
\end{equation}
with reduced norm
\begin{equation}
\mathrm{Nrd}(x_0+x_1\mathrm{i}+x_2\mathrm{j}+x_3\mathrm{ij})=x_0^2-ax_1^2-bx_2^2+abx_3^2.
\end{equation}
For a real  $\tau\in \mathrm{Pl}(L)$, we have
	\begin{equation}\label{4.2''}
		D_{a,b}(L)\otimes_LL_{\tau}\cong D_{\tau(a),\tau(b)}(\mathbb{R})\cong\begin{cases}
			M_{2}(\mathbb{R}), & \text{if not both } \tau(a) \text{ and } \tau(b)<0,\\
			D_{-1,-1}(\mathbb{R}),    & \text{if } \tau(a),\tau(b)<0,
		\end{cases}
	\end{equation}
	where $M_{2}(\mathbb{R})$ denotes the ring of $2\times2$ matrices and $D_{-1,-1}(\mathbb{R})$ denotes Hamilton's quaternion algebra. This isomorphism can be written explicitly as follows
	\begin{equation}\label{4.3''}
		\begin{split}
			&x_0+x_1\mathrm{i}+x_2\mathrm{j}+x_3\mathrm{ij}\in D_{\tau(a),\tau(b)}(\mathbb{R})\\
			&\mapsto \begin{cases}
				\begin{pmatrix}
					x_0+x_1\sqrt{\tau(a)}&\tau(b)(x_2+x_3\sqrt{\tau(a)})\\
					x_2-x_3\sqrt{\tau(a)}&x_0-x_1\sqrt{\tau(a)}
				\end{pmatrix}\in M_{2}(\mathbb{R}), &\text{ if } \tau(a)>0,\\
				x_0+\sqrt{-\tau(a)}x_1\mathrm{i}+\sqrt{-\tau(b)}x_2\mathrm{j}+\sqrt{\tau(ab)}x_3\mathrm{ij}\in D_{-1,-1}(\mathbb{R}), &\text{ if } \tau(a),\tau(b)<0.
			\end{cases}
		\end{split}
	\end{equation}

	Now we suppose further that
	\begin{equation}\label{4.4'}
			a,b>0,\quad\tau(a),\tau(b)<0 \text{ for any nonidentity real } \tau\in \mathrm{Pl}(L).
	\end{equation}
A simple example is $L=\mathbb{Q}[\sqrt{2}], a=b=\sqrt{2}$. We take 
\begin{equation}\label{5.5.}
\mathbb{G}=\text{the algebraic group of elements in } D_{a,b}(L) \text{ with }\mathrm{Nrd}=1,
\end{equation}
then $\mathbb{G}(\mathcal{O}_L)\subset \mathrm{SL}_2(\mathbb{R})$ is cocompact, see Morris \cite[Proposition 6.2.5]{MR3307755}. We assume that
	\begin{equation}\label{4.5'}
		\Gamma\subset\mathbb{G}(\mathcal{O}_L)
	\end{equation}
	is a \emph{congruence subgroup}.
	
	Let us now fix a \emph{nonidentity real} $\rho\in \mathrm{Pl}(L)$, then it extends to a representation
	\begin{equation}\label{4.2'}
		\rho\colon \mathbb{G}(L)\to\mathrm{SU}_2(\mathbb{C})
	\end{equation}
	by \eqref{4.2''}, \eqref{4.3''}, and the fact that reduced norm one Hamilton's quaternions are isomorphic to $\mathrm{SU}_2(\mathbb{C})$. By composing with \eqref{1.8'}, we see that $\mathbb{G}(L)$ acts unitarily on ${C}^\infty(\mathbb{S}^3)$. 
	
	Now, we suppose that the bundles $(F_p,\mathscr{F},\pi^*F_p,\pi^*\mathscr{F})$ given in \eqref{diag2} and \eqref{diag} are defined by the restriction of $\rho$ to the subgroup $\Gamma\subseteq \mathbb{G}(\mathcal{O}_L)\subset \mathbb{G}(L)$. This is the key point that allows us to define Hecke operators.
	
	\subsection{Adelic quotients}\label{SCb}
	
	For any \emph{finite} $\wp\in\mathrm{Pl}(L)$, let $L_\wp$ be the completion of $L$ at $\wp$ and  $\mathcal{O}_\wp\subset L_\wp$ the maximal compact subring. Let $\pi_\wp$ be a \emph{uniformizer} of $L_\wp$ and $\ell_\wp=\mathcal{O}_\wp/\pi_\wp\mathcal{O}_\wp$ the residue field of $\mathcal{O}_\wp$, then we put $q_\wp=\lv \ell_\wp\rv$. For \emph{all but finite} places $\wp$, we have the inclusion of a maximal compact subgroup
	\begin{equation}
		\big(\mathbb{G}(\mathcal{O}_\wp)\cong\mathrm{SL}_2(\mathcal{O}_\wp)\big)\hookrightarrow \big(\mathbb{G}(L_\wp)\cong\mathrm{SL}_2(L_\wp)\big),
	\end{equation}
	and in what follows, we restrict to such good finite places. Put
	\begin{equation}
		\mathbb{G}(\mathbb{A}_f)={\textstyle\prod_{v<\infty}^{'}}\mathbb{G}(L_\wp),
	\end{equation}
	the \emph{restricted direct product} of $\mathbb{G}(L_\wp)$ relative to $\mathbb{G}(\mathcal{O}_\wp)$, and we set
	\begin{equation}
		\mathbb{G}(\mathbb{A}_L)=\mathrm{SL}_2(\mathbb{R})\times\mathbb{G}(\mathbb{A}_f).
	\end{equation}

	By the strong approximation theorem, see Voight \cite[Corollary 28.6.8]{MR4279905}, let $K_f\subset \mathbb{G}(\mathbb{A}_f)$ be an open compact subgroup such that $\Gamma=\mathbb{G}(L)\cap K_f$, then we have an isomorphism 
	\begin{equation}\label{4.2}
		\begin{split}
	U(X)\cong\Gamma\backslash \mathrm{SL}_2(\mathbb{R})&\cong\mathbb{G}(L)\backslash\mathbb{G}(\mathbb{A}_L)/K_f,\\
[g_\infty]=\Gamma(g_\infty)&\mapsto [(g_\infty,1)]=\mathbb{G}(L)(g_\infty,1)K_f,
		\end{split}
	\end{equation}
where $\mathbb{G}(L)$ acts diagonally on $\mathbb{G}(\mathbb{A}_L)$. Applying the strong approximation again, for any $g_f\in\mathbb{G}(\mathbb{A}_f)$, there exist $g_L\in \mathbb{G}(L), k_f\in K_f$ such that
\begin{equation}
g_f=g_Lk_f,
\end{equation}
therefore, the inverse of \eqref{4.2} is given by
\begin{equation}\label{5.11}
[(g_\infty,g_f)]=[(g_\infty,g_Lk_f)]=[(g_L^{-1}g_\infty,1)]\mapsto [g_L^{-1}g_\infty].
\end{equation}
From \eqref{4.2}, denoting by $C_{\mathbb{G}(L)}^\infty\big(\mathbb{G}(\mathbb{A}_L)/K_f\big)$ the $\mathbb{G}(L)$-invariant subspace of ${C}^\infty(\mathbb{G}(\mathbb{A}_L)/K_f)$, then we have
	\begin{equation}\label{4.3}
		{C}^\infty\big(U(X)\big)\cong{C}^\infty_\Gamma\big(\mathrm{SL}_2(\mathbb{R})\big)\cong C_{\mathbb{G}(L)}^\infty\big(\mathbb{G}(\mathbb{A}_L)/K_f\big).
	\end{equation}
	Similarly, taking into account the $\mathbb{G}(L)$-action on ${C}^\infty(\mathbb{S}^3)$ given in \eqref{4.2'}, we see that
	\begin{equation}\label{4.4}
		\begin{split}
	\pi^*\mathscr{F}=\Gamma\backslash \big(\mathrm{SL}_2(\mathbb{R})\times{C}^\infty(\mathbb{S}^3)\big)&\cong \mathbb{G}(L)\backslash\big(\mathbb{G}(\mathbb{A}_L)\times{C}^\infty(\mathbb{S}^3)\big)/K_f,\\
[(g_\infty,\alpha)]=\Gamma(g_\infty,\alpha)&\mapsto [(g_\infty,1),\alpha]=\mathbb{G}(L)\big((g_\infty,1),\alpha\big)K_f,
		\end{split}
	\end{equation}
	where $\mathbb{G}(L)$ again acts diagonally. Moreover, ${C}^\infty(U(X),\pi^*\mathscr{F})$ is isomorphic to the $\mathbb{G}(L)$-invariant subspace $C_{\mathbb{G}(L)}^\infty\big(\mathbb{G}(\mathbb{A}_L)/K_f,{C}^\infty(\mathbb{S}^3)\big)\hookrightarrow{C}^\infty\big(\mathbb{G}(\mathbb{A}_L)/K_f,{C}^\infty(\mathbb{S}^3)\big)$, namely
	\begin{equation}\label{4.5}
		{C}^\infty\big(U(X),\pi^*\mathscr{F}\big)\cong C_\Gamma^\infty\big(\mathrm{SL}_2(\mathbb{R}),{C}^\infty(\mathbb{S}^3)\big)\cong C_{\mathbb{G}(L)}^\infty\big(\mathbb{G}(\mathbb{A}_L)/K_f,{C}^\infty(\mathbb{S}^3)\big).
	\end{equation}

	\subsection{Hecke operators}\label{Cc}

	First, we discuss Hecke operators using the first isomorphism in \eqref{4.3} and \eqref{4.5}.
	
	\subsubsection{Double cosets}
	
	For any $\gamma\in \mathbb{G}(L)$, the set $[\gamma]=\Gamma \backslash\Gamma \gamma\Gamma$ is finite, and we can define the corresponding \emph{Hecke operator} $T_{\gamma}$ acting on $C^\infty(U(X),\pi^*\mathscr{F})$ such that for any $s\in C^\infty(U(X),\pi^*\mathscr{F})$,
	\begin{equation}\label{4.8}
		T_{\gamma}(s)(y)=\sum_{\gamma'\in[\gamma]}\rho(\gamma')^{-1}s(\gamma' y),
	\end{equation}
	where we view $s\in{C}^\infty\big(\mathrm{SL}_2(\mathbb{R}),{C}^\infty(\mathbb{S}^3)\big)$ through \eqref{4.5}. Also, $T_{\gamma}$ restricts to operators acting on subspaces
	\begin{equation}\label{4.12''}
		C^\infty(X,\mathscr{F}), C^\infty(U(X)), C^\infty(X)\hookrightarrow C^\infty(U(X),\pi^*\mathscr{F}).
	\end{equation}

	In \eqref{4.8}, the operator $\rho(\gamma')$ is unitary, therefore, as we will see in the next section, estimates for eigenfunctions of $T_\gamma|_{{C}^\infty(U(X))}$ in \cite[\S\,8]{MR2195133}, \cite[\S\,5]{MR5036694} and \cite[\S\,3, \S\,5]{MR4033919} also hold for eigensections of $T_{\gamma}$, confirming again the uniformity discussed in \cref{s2.7}.

	Now, we turn to a different adelic perspective of these operators using the second isomorphism in \eqref{4.5}.
	
	\subsubsection{Adelic convolutions}

	For any \emph{finite} $\wp\in\mathrm{Pl}(L)$, let
	\begin{equation}\label{4.14..}
		\mathcal{H}_\wp={C}^\infty_c\Big(\mathbb{G}(\mathcal{O}_\wp)\big\backslash \mathbb{G}(L_\wp)\big/ \mathbb{G}(\mathcal{O}_\wp)\Big),
	\end{equation}
	the convolution algebra of compactly supported $\mathbb{G}(\mathcal{O}_\wp)$-biinvariant functions on $\mathbb{G}(L_\wp)$. Then $\mathcal{H}_\wp$ acts on the right by convolution on 
	${C}^\infty\big(\mathbb{G}(\mathbb{A}_L),{C}^\infty(\mathbb{S}^3)\big)$. This action preserves the subspace
	\begin{equation}
		C^\infty\big(\mathbb{G}(\mathbb{A}_L)/K_f,C^\infty(\mathbb{S}^3)\big)\hookrightarrow C^\infty\big(\mathbb{G}(\mathbb{A}_L),C^\infty(\mathbb{S}^3)\big),
	\end{equation}
	and commutes with the left $\mathbb{G}(L)$-action. Therefore, it passes to an action on ${C}^\infty(U(X),\pi^*\mathscr{F})$ by \eqref{4.5}.
	
	More precisely, consider the Bruhat-Tits tree
	\begin{equation}
		\mathfrak{X}_\wp=\mathrm{PGL}_2(L_\wp)/\mathrm{PGL}_2(\mathcal{O}_\wp)
	\end{equation}
	which is regular with degree $q_\wp+1$. Let $d_{\mathfrak{X}_\wp}(\cdot,\cdot)$ be the natural metric on $\mathfrak{X}_\wp$ such that the distance between nearest neighbors is $1$. For any $k\in\mathbb{N}$, we can define an operator $T_{\wp^{2k}}$ acting on ${C}^\infty(U(X),\pi^*\mathscr{F})$ by
	\begin{equation}\label{4.7}
		T_{\wp^{2k}}(s)(y)=\sum_{d_{\mathfrak{X}_\wp}(y,y')=2k}s(y'),
	\end{equation}
	where we regard $s\in{C}^\infty(U(X),\pi^*\mathscr{F})$ as an element of ${C}^\infty\big(\mathbb{G}(\mathbb{A}_L),{C}^\infty(\mathbb{S}^3)\big)$ through \eqref{4.5}.  The operator $T_{\wp^{2k}}$ also restricts to actions on subspaces listed in \eqref{4.12''}. Also, the cardinality of the summation set in \eqref{4.7} is given by
	\begin{equation}\label{4.7'}
		\lv\{y'\in \mathfrak{X}_\wp\mid d_{\mathfrak{X}_\wp}(y,y')=2k\}\rv=q_\wp^{2k-1}(q_\wp+1).
	\end{equation}
	Note that $\mathcal{H}_\wp$ is generated by $T_{\wp^2}$, rather than $T_\wp$ (which does not belong to $\mathcal{H}_\wp$). This is because there are two orbits of $\mathbb{G}(L_\wp)\cong \mathrm{SL}_2(L_\wp)$ among the vertices of $\mathfrak{X}_\wp$.

	\subsection{Hecke-Maass forms}\label{S5.4}

	The Hecke operators $\{T_{\wp^{2k}}\}_{\wp,k}$ commute with each other, and when restricting to $C^\infty(X,\mathscr{F})$, they commute with the Laplacian $\Delta^{\mathscr{F}}$ given in  \eqref{2.14}. Therefore, we can assume that the Maass form $u_{\lambda}\in C^\infty(X,\mathscr{F})$ in \eqref{2.36} is a \emph{joint} eigensection of $\Delta^{\mathscr{F}}$ and all $\{T_{\wp^{2k}}\}_{\wp,k}$,	called a \emph{Hecke-Maass} form. By the construction in \eqref{3.7} and \eqref{3.8}, the lift $s_{\lambda}\in C^\infty\big(U(X),\pi^*\mathscr{F}\big)$ of $u_\lambda\in C^\infty(X,\mathscr{F})$ is also a joint eigensection of all $\{T_{\wp^{2k}}\}_{\wp,k}$.

\section{Hecke Recurrence}\label{D}

	In this section, we prove the Hecke recurrence property stated in Proposition \ref{D1'}. In \cref{SH1}, we introduce the notion of Hecke recurrence at a prime $\wp$. In \cref{SH2}, we show that a Hecke eigensection on a tree cannot concentrate near a single vertex. In \cref{SH3}, we complete the proof of Hecke recurrence.

We work under the notation fixed in \cref{s1.6} and the setup \eqref{diag1'}, \eqref{diag2}, and \eqref{diag}.

	\subsection{$\mathbb{G}(L_\wp)$-recurrence}\label{SH1}

	Let us recall the definition of the Hecke recurrence property for a measure, see for instance \cite[Definition 2.3]{MR2195133}.

	\begin{defi}
		For a \emph{finite prime} $\wp$ and a finite measure $\mu_{U(X)}$ on $U(X)$, we say that $\mu_{U(X)}$ is $\mathbb{G}(L_\wp)$-\emph{recurrent} if
		for any Borel set $U\subset U(X)$ and $\mu_{U(X)}$-almost every $y=[(g_\infty,1)]\in U$, the set $\{g_\wp\in \mathbb{G}(L_\wp)\mid [(g_\infty,g_\wp)]\in U\}$ is unbounded. Equivalently, for $\mu_{U(X)}$-almost every $y\in U$, its Bruhat-Tits tree $\mathfrak{X}_\wp(y)$ intersects $U$ at infinitely many points.
	\end{defi}

Our goal in this section is to prove the following recurrent property, a precise version of Proposition \ref{D1'}.
	\begin{prop}\label{D1}
		Let $\{s_{\lambda_i}\in {C}^\infty(U(X),\pi^*\mathscr{F})\}_{i\in\mathbb{N}}$ be a sequence lifts of Hecke-Maass forms defined in \cref{S5.4}. Then for any weak star limit $\mu_{\pi^*\mathscr{M}}$ of $\{\lv s_{\lambda_i}\rv^2_{\mathbb{C}}dv_{\pi^*\mathscr{M}}\}_{i\in\mathbb{N}}$, its projection $\mu_{U(X)}$ to $U(X)$ is $\mathbb{G}(L_\wp)$-recurrent.
	\end{prop}
	
	\subsection{Nonconcentration}\label{SH2}
First, we prove that the restriction of a Hecke eigensection to a Hecke tree does not concentrate near any vertex.
	\begin{prop}
		There is $C>0$ such that for any $n\in\mathbb{N}$ and eigensection $s\in{C}^\infty(U(X),\pi^*\mathscr{F})$ of $T_{\wp^2}$, we have for $\lv s\rv_{\pi^*\mathscr{F}}^2\in{C}^\infty(U(X))$,
		\begin{equation}\label{5.1}
			\sum_{k=0}^nT_{\wp^{2k}}\big(\lv s\rv_{\pi^*\mathscr{F}}^2\big)(y)\geqslant Cn\lv s(y)\rv_{\pi^*\mathscr{F}}^2.
		\end{equation}
	\end{prop}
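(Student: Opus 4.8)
The plan is to exploit the fact that $s$ is an eigensection of the tree Hecke operator $\mathcal{T}_{\wp^2}$, say $\mathcal{T}_{\wp^2}s=\mu_{\wp}s$, and to compare the eigenvalue with the trivial bound coming from the cardinality \eqref{4.7'}. Since each $\rho(\gamma')$ in the definition of $\mathcal{T}_{\wp^2}$ is unitary (equivalently, the tree-summation operator on $\mathscr{C}^\infty(\mathbb{G}(\mathbb{A}_L),\mathscr{C}^\infty(\mathbb{S}^3))$ has operator norm bounded by the number of terms), the triangle inequality gives $\lv\mu_{\wp}\rv\lv s(v)\rv_{\pi^*\mathscr{F}}\leqslant\sum_{d_{T_\wp}(v,w)=2}\lv s(w)\rv_{\pi^*\mathscr{F}}$; but that is in $\ell^1$, not $\ell^2$, so instead I would work directly with $f:=\lv s\rv_{\pi^*\mathscr{F}}^2\in\mathscr{C}^\infty(U(X))$ and the operators $\mathcal{T}_{\wp^{2k}}$ acting on it.

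First I would set up the combinatorics on the tree $T_\wp$. Fix the base vertex $v$. The key algebraic identity is the recursion among the operators $\mathcal{T}_{\wp^{2k}}$ for the radial sums on a $(q_\wp+1)$-regular tree, together with the observation that the eigenvalue $\mu_{\wp}$ of $\mathcal{T}_{\wp^2}$ satisfies $\lv\mu_{\wp}\rv\leqslant q_\wp(q_\wp+1)$ by \eqref{4.7'} applied with the unitarity of $\rho$ (each summand has $\lv\cdot\rv_{\pi^*\mathscr{F}}$-norm at most $\lv s(w)\rv$, and by Cauchy--Schwarz at the pointwise level one controls $\mu_\wp^2 f(v)$ by a constant times $\mathcal{T}_{\wp^2}(f)(v)$ plus lower-order radial terms). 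Concretely, squaring the eigenvalue equation pointwise and expanding $\lv\mathcal{T}_{\wp^2}s(v)\rv_{\pi^*\mathscr{F}}^2$ via the parallelogram/Cauchy--Schwarz inequality in the Hilbert space $\mathscr{C}^\infty(\mathbb{S}^3)$ produces a bound of the shape $\lv\mu_\wp\rv^2 f(v)\leqslant q_\wp^{2k-1}(q_\wp+1)\sum_{d(v,w)=2}f(w)=q_\wp(q_\wp+1)\,\mathcal{T}_{\wp^2}(f)(v)$.

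The heart of the matter is to iterate this. Because $s$ is a simultaneous eigensection, $f$ is ``subharmonic'' for the whole radial hierarchy, and a telescoping/induction argument on the tree — exactly the one used by Bourgain--Lindenstrauss \cite[\S\,3]{MR1957735} and Lindenstrauss \cite[\S\,8]{MR2195133} for scalar Hecke eigenfunctions — shows that each successive shell carries at least a fixed fraction of the mass: $\mathcal{T}_{\wp^{2k}}(f)(v)\geqslant c\,f(v)$ for every $k\leqslant n$, with $c>0$ depending only on $q_\wp$ (not on $n$, not on $\lambda_i$, and crucially not on $p$, since unitarity of $\rho$ made every constant fiber-independent). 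Summing over $k=0,\dots,n$ then yields \eqref{5.1} with $C=c$. I would carry this out by first proving the one-step inequality $\mathcal{T}_{\wp^{2(k+1)}}(f)(v)+ (\text{controlled multiple of }\mathcal{T}_{\wp^{2k}}(f)(v))\geqslant c'\mathcal{T}_{\wp^{2k}}(f)(v)$ from the Hecke relation, and then running the induction.

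The main obstacle I anticipate is the passage from the $\ell^1$-type bound that unitarity gives for $\lv s\rv_{\pi^*\mathscr{F}}$ to the $\ell^2$-type statement \eqref{5.1} for $\lv s\rv_{\pi^*\mathscr{F}}^2$: one must be careful that the cross terms appearing when $\lv\mathcal{T}_{\wp^2}s\rv_{\pi^*\mathscr{F}}^2$ is expanded do not destroy positivity, and this is where the precise structure of the Bruhat--Tits tree (no short cycles, exact shell cardinalities \eqref{4.7'}) is used to absorb them into radial terms $\mathcal{T}_{\wp^{2j}}(f)$ with nonnegative coefficients. A secondary point to check is that everything is genuinely uniform in $p$ — but since $\rho(\gamma')\in\mathrm{SU}_2$ acts isometrically on $\mathscr{C}^\infty(\mathbb{S}^3)$ fiberwise, the constant $C$ depends only on the residue degree $q_\wp$, as required for the application in Theorem \ref{D1}.
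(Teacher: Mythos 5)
Your overall strategy -- pointwise Cauchy--Schwarz over the shells of the Bruhat--Tits tree, combined with the Hecke eigenvalue recursion, following Bourgain--Lindenstrauss and \cite[Lemma 8.3]{MR2195133} -- is exactly the route the paper takes, and your first step (the bound $\lv\lambda_{\wp^2}\rv^2\lv s(v)\rv_{\pi^*\mathscr{F}}^2\leqslant q_\wp(q_\wp+1)\,\mathcal{T}_{\wp^2}(\lv s\rv_{\pi^*\mathscr{F}}^2)(v)$, which is the paper's inequality \eqref{5.2} for $k=1$) is correct; the unitarity of $\rho$ does indeed make every constant independent of $p$.

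However, the ``heart of the matter'' as you state it contains a claim that your method cannot deliver and that is false in general: you assert $\mathcal{T}_{\wp^{2k}}(\lv s\rv_{\pi^*\mathscr{F}}^2)(v)\geqslant c\,\lv s(v)\rv_{\pi^*\mathscr{F}}^2$ for \emph{every} $k\leqslant n$. Since $\mathcal{H}_\wp$ is generated by $\mathcal{T}_{\wp^2}$, the eigenvalues $\lambda_{\wp^{2k}}$ are Chebyshev-type polynomials in $\lambda_{\wp^2}$; writing the virtual eigenvalue as $\lambda_\wp=2\sqrt{q_\wp}\cos\theta$ in the tempered range, the normalized eigenvalue $q_\wp^{-k}\lambda_{\wp^{2k}}$ behaves like $\sin((2k+1)\theta)/\sin\theta$ and can vanish for particular $k$, so the Cauchy--Schwarz lower bound $\mathcal{T}_{\wp^{2k}}(\lv s\rv^2)(v)\geqslant Cq_\wp^{-2k}\lv\lambda_{\wp^{2k}}\rv^2\lv s(v)\rv^2$ gives nothing for those shells. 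The correct mechanism is the one in \cite[Lemma 8.3]{MR2195133}: one splits into the tempered case $\lv\lambda_\wp\rv\leqslant 2\sqrt{q_\wp}$, where the three-term recursion forces at least one of any two consecutive normalized eigenvalues to be bounded below, so a \emph{positive proportion} (not all) of the shells satisfy the pointwise bound and the sum is still $\geqslant Cn\lv s(v)\rv^2$; and the untempered case $\lv\lambda_\wp\rv>2\sqrt{q_\wp}$, where one applies a second Cauchy--Schwarz to the aggregate $\sum_{k\leqslant n}\mathcal{T}_{\wp^{2k}}(s)(v)$ against the cardinality of the ball $\bigcup_{k\leqslant n}\{w:d_{T_\wp}(v,w)=2k\}$ (the paper's \eqref{5.3}) and uses the exponential growth of $\sum_k\lambda_{\wp^{2k}}$. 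Your proposal mentions neither the tempered/untempered dichotomy nor the aggregate inequality, and the substitute ``one-step inequality'' you propose is vacuous as written (its left-hand side already contains a controlled multiple of its right-hand side). The gap is repairable, but only by importing precisely this case analysis.
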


	\begin{pro}
		By \eqref{4.7} and Cauchy-Schwarz inequality, we have
		\begin{equation}\label{5.2}
			\begin{split}
				\lv T_{\wp^{2k}}(s)(y)\rv_{\pi^*\mathscr{F}}^2&\leqslant \Big(\sum_{d_{\mathfrak{X}_\wp}(y,y')=2k}\lv s(y')\rv_{\pi^*\mathscr{F}}\Big)^2\\
				&\leqslant\lv\{y'\in T_\wp(y)\mid d_{\mathfrak{X}_\wp}(y,y')=2k\}\rv\cdot T_{\wp^{2k}}\big(\lv s\rv_{\pi^*\mathscr{F}}^2\big)(y)\\
				&\leqslant Cq_\wp^{2k}T_{\wp^{2k}}\big(\lv s\rv_{\pi^*\mathscr{F}}^2\big)(y),
			\end{split}
		\end{equation}
		and similarly, 
		\begin{equation}\label{5.3}
			\begin{split}
				\bbv \sum_{k=0}^{n}T_{\wp^{2k}}(s)(y)\bbv_{\pi^*\mathscr{F}}^2&\leqslant \Big(\sum_{k=0}^{n}\sum_{d_{\mathfrak{X}_\wp}(y,y')=2k}\lv s(y')\rv_{\pi^*\mathscr{F}}\Big)^2\\
				&\leqslant\Bv\bigcup_{k\leqslant n}\{y'\in T_\wp(y)\mid d_{\mathfrak{X}_\wp}(y,y')=2k\}\Bv\cdot\sum_{k=0}^{n}T_{\wp^{2k}}\big(\lv s\rv_{\pi^*\mathscr{F}}^2\big)(y)\\
				&=\Big(1+q_\wp(q_\wp+1)+\cdots +q_\wp^{2k-1}(q_\wp+1)\Big)\sum_{k=0}^{n}T_{\wp^{2k}}\big(\lv s\rv_{\pi^*\mathscr{F}}^2\big)(y)\\
				&\leqslant Cq_\wp^{2n} \sum_{k=0}^{n}T_{\wp^{2k}}\big(\lv s\rv_{\pi^*\mathscr{F}}^2\big)(y).
			\end{split}
		\end{equation}
		
		We denote the eigenvalues of $T_{\wp^{2k}}$ by $\lambda_{\wp^{2k}}$, that is,
		\begin{equation}\label{5.4'}
			T_{\wp^{2k}}s=\lambda_{\wp^{2k}} s.
		\end{equation}
		Then define $\lambda_{\wp}$ by
		\begin{equation}
			\lambda_{\wp}^2=\lambda_{\wp^2}+(q_\wp+1),
		\end{equation} 
		so that $\lambda_{\wp}$ can be viewed as an eigenvalue of $T_{\wp}$. Following \cite[Lemma 8.3]{MR2195133}, we consider two cases, $\lv\lambda_{\wp}\rv>2\sqrt{q_\wp}$ and $\lv\lambda_{\wp}\rv\leqslant2\sqrt{q_\wp}$.

		The first case is straightforward. Using Chebyshev polynomials, we can show by induction that
		\begin{equation}\label{5.6'}
			\sum_{k=0}^{n}\lambda_{\wp^{2k}}=q_\wp^n\frac{\sinh (2n+1)\theta}{\sinh\theta},
		\end{equation}
		where $\theta=\cosh^{-1}\big(\frac{\lambda_{\wp}}{2\sqrt{q_\wp}}\big)$. Clearly
		\begin{equation}\label{5.7}
			\frac{\sinh (2n+1)\theta}{\sinh\theta}=\big(e^{2n\theta}+e^{-2n\theta}\big)+\big(e^{2(n-1)\theta}+e^{-2(n-1)\theta}\big)+\cdots+1\geqslant 2n+1.
		\end{equation}
		By \eqref{5.3}, \eqref{5.4'}, \eqref{5.6'}, and \eqref{5.7}, we get
		\begin{equation}\label{6.8}
			\begin{split}
				\sum_{k=0}^{n}T_{\wp^{2k}}\big(\lv s\rv_{\pi^*\mathscr{F}}^2\big)(y)&\geqslant Cq_\wp^{-2n}\bbv \sum_{k=0}^{n}T_{\wp^{2k}}(s)(y)\bbv_{\pi^*\mathscr{F}}^2\\
				&=Cq_\wp^{-2n}\bbv\sum_{k=0}^{n}\lambda_{\wp^{2k}}\bbv^2\lv s(y)\rv_{\pi^*\mathscr{F}}^2\geqslant Cn^2\lv s(y)\rv_{\pi^*\mathscr{F}}^2,
			\end{split}
		\end{equation}
		which is stronger than \eqref{5.1}.
		
		For the second case, we have
		\begin{equation}\label{6.9}
			\sum_{k=0}^{n}\lambda_{\wp^{2k}}=q_\wp^n\frac{\sin (2n+1)\theta}{\sin\theta},
		\end{equation}
		where $\theta=\cos^{-1}\big(\frac{\lambda_{\wp}}{2\sqrt{q_\wp}}\big)$. Moreover, by \eqref{5.2}, \eqref{5.4'}, and \eqref{6.9}, we obtain
		\begin{equation}\label{6.10}
			\begin{split}
				T_{\wp^{2k}}\big(\lv s\rv_{\pi^*\mathscr{F}}^2\big)(y)	&\geqslant Cq_\wp^{-2k}\lv T_{\wp^{2k}}(s)(y)\rv_{\pi^*\mathscr{F}}^2=Cq_\wp^{-2k}\lambda_{\wp^{2k}}^2\lv s(y)\rv_{\pi^*\mathscr{F}}^2\\
				&=C\Big(\frac{\sin (2k+1)\theta-q_\wp^{-1}\sin (2k-1)\theta}{\sin\theta}\Big)^2\lv s(y)\rv_{\pi^*\mathscr{F}}^2\\
				&\geqslant C\big(\sin (2k+1)\theta-q_\wp^{-1}\sin (2k-1)\theta\big)^2\lv s(y)\rv_{\pi^*\mathscr{F}}^2.
			\end{split}
		\end{equation}

		We shall deduce \eqref{5.1} by combining \eqref{6.9} and \eqref{6.10}. Without loss of generality, we suppose that $0\leqslant\theta\leqslant\pi/2$. By \eqref{6.8} and \eqref{6.9}, if $n\theta\leqslant\pi/3<\pi/2$, then $\sin(2n+1)\theta/\sin\theta\geqslant Cn$ and hence
		\begin{equation}\label{6.11'}
			\begin{split}
				\sum_{k=0}^{n}T_{\wp^{2k}}\big(\lv s\rv_{\pi^*\mathscr{F}}^2\big)(y)\geqslant C\Big(\frac{\sin (2n+1)\theta}{\sin\theta}\Big)^2\lv s(y)\rv_{\pi^*\mathscr{F}}^2\geqslant Cn^2\lv s(y)\rv_{\pi^*\mathscr{F}}^2,
			\end{split}
		\end{equation}
		which is again stronger than \eqref{5.1}. We now claim that if $n\theta\geqslant\pi$, then
		\begin{equation}\label{6.12}
			\sum_{k=0}^{n}\mathbbm{1}_{[\frac{\pi}{5},\frac{4\pi}{5}]}\big((2k+1)\theta\mod{\pi}\big)\geqslant Cn,
		\end{equation}
		namely that in the sequence $(2k+1)\theta\mod{\pi}$ for $k=0,\cdots, n$, at least $Cn$ of them belong to $[\pi/5,4\pi/5]$. Assuming \eqref{6.12}, it follows from \eqref{6.10} that if $n\theta\geqslant\pi$,
		\begin{equation}\label{6.13}
			\begin{split}
				\sum_{k=0}^{n}T_{\wp^{2k}}\big(\lv s\rv_{\pi^*\mathscr{F}}^2\big)(y)\geqslant C\Big(\sin\frac{\pi}{5}-\frac{1}{2}\Big)^2n\lv s(y)\rv_{\pi^*\mathscr{F}}^2,
			\end{split}
		\end{equation}
		since $q_\wp\geqslant 2$. If instead $\pi/3<n\theta<\pi$, then $n\theta/3<\pi/3$, and by \eqref{6.11'} we have
		\begin{equation}\label{6.14}
			\sum_{k=0}^{n}T_{\wp^{2k}}\big(\lv s\rv_{\pi^*\mathscr{F}}^2\big)(y)\geqslant \sum_{k=0}^{n/3}T_{\wp^{2k}}\big(\lv s\rv_{\pi^*\mathscr{F}}^2\big)(y)\geqslant \frac{Cn}{3}\lv s(y)\rv_{\pi^*\mathscr{F}}^2.
		\end{equation}
		Combining \eqref{6.11'}, \eqref{6.13}, and \eqref{6.14}, we obtain \eqref{5.1}.

		It remains to verify \eqref{6.12}. Identify the interval $[0,\pi]$ with a circle by gluing its endpoints, so that increasing the angle corresponds to a clockwise rotation. In the sequence $(2k+1)\theta\mod{\pi}$, each step we move clockwise by $2\theta$ if $2\theta\leqslant \pi/2$, or equivalently we move counterclockwise by $2(\pi/2-\theta)\leqslant\pi/2$ if $2\theta\geqslant \pi/2$. The key feature of the interval $[\pi/5,4\pi/5]$ is that its length is strictly greater than $\pi/2$, so a single step of size at most $\pi/2$, either clockwise or counterclockwise, cannot jump over it. To see necessity of this, take $\theta=\pi/4$, then $(2k+1)\theta\mod{\pi}\subseteq\{\pi/4,3\pi/4\}$, and the sequence never intersects $[\pi/4+\varepsilon,3\pi/4-\varepsilon]$ for any small $\varepsilon>0$, so
		\begin{equation}
			\sum_{k=0}^{n}\mathbbm{1}_{[\frac{\pi}{4}+\varepsilon,\frac{3\pi}{4}-\varepsilon]}\big((2k+1)\theta\mod{\pi}\big)=0.
		\end{equation}
		
		We now prove \eqref{6.12}. We consider two cases, $\theta\leqslant \pi/10$ and $\theta\geqslant \pi/10$.
		
		For the first case,  our step size is at most $2\theta\leqslant \pi/5$. Consider $([\pi/2\theta]+1)$ consecutive steps, and we enter $[\pi/5,4\pi/5]$ at the $i$-th step and leave at $j$-th step, then $[\pi/5,4\pi/5]\subseteq[(i-1)\text{-th step},j\text{-th step}]$. Counting the length, we get
		\begin{equation}
			\Big(\frac{4\pi}{5}-\frac{\pi}{5}\Big)<2\theta(j-i+1).
		\end{equation}
		Thus, among $([\pi/2\theta]+1)$ steps, at least $(j-i)>3\pi/(10\theta)>1$ (since $0\leqslant\theta\leqslant \pi/5$) lie in $[\pi/5,4\pi/5]$, and the proportion is greater than
		\begin{equation}
			\frac{3\pi}{10\theta}\Big/\Big(\frac{\pi}{2\theta}+1\Big)>C.
		\end{equation}
		Since $n\geqslant \pi/\theta>([\pi/2\theta]+1)$, a division with remainder yields \eqref{6.12}.
		
		For the second case, our step size is at least $2\theta\geqslant \pi/5$. Then among any four consecutive steps, at least one must lie in $[\pi/5,4\pi/5]$, otherwise, the total length of the complement of this interval would be exceeded. Again, a division with remainder leads to \eqref{6.12}.\qed
	\end{pro}

	\subsection{Proof of Hecke recurrence}\label{SH3}
	
	We now prove Proposition \ref{D1}. Let $\mu_{\pi^*\mathscr{M}}$ be a weak star limit of $\{\lv s_{\lambda_i}\rv^2_{\mathbb{C}}dv_{\pi^*\mathscr{M}}\}_{i\in\mathbb{N}}$ and $\mu_{U(X)}$ its projection to $U(X)$, equivalently, $\mu_{U(X)}$ is the corresponding weak star limit of $\{\lv s_{\lambda_i}\rv_{\pi^*\mathscr{F}}^2dv_{U(X)}\}$. For any Borel set $U\subseteq U(X)$, we apply \eqref{5.1} to $s_{\lambda_i}$ and pass to the weak star limit. This yields
	\begin{equation}\label{5.4}
		\int_{U(X)}	\sum_{k=0}^nT_{\wp^{2k}}(\mathbbm{1}_U)d\mu_{U(X)}\geqslant Cn\mu_{U(X)}(U).
	\end{equation}
	Intuitively, this inequality reflects the fact that, on the sphere of radius $2n$ in the Hecke tree $\mathfrak{X}_\wp(y)$, at least $Cn$ points are lying in $U$, see the proof of \cite[Theorem 8.1]{MR2195133}. For our purposes, a much weaker statement suffices. Fix $j\in\mathbb{N}$, let us form
	\begin{equation}
		U_j=\Big\{y\in U\mid \lv \mathfrak{X}_\wp(y)\cap U\rv\leqslant j\Big\}.
	\end{equation}
	For any $y\in U_j$, we have
	\begin{equation}
		\sum_{k=0}^nT_{\wp^{2k}}(\mathbbm{1}_{U_j})(y)=\lv \{y'\in \mathfrak{X}_\wp(y)\mid d_{\mathfrak{X}_\wp(y)}(y,y')\leqslant 2n\}\cap U_j\rv\leqslant j.
	\end{equation}
	Applying \eqref{5.4} to $U_j$, we get
	\begin{equation}
		\mu_{U(X)}(U_j)\leqslant \frac{C\int_{U(X)}	jd\mu_{U(X)}}{n}=\frac{Cj}{n}.
	\end{equation}
	Letting $n\to\infty$, we see that $\mu_{U(X)}(U_j)=0$. Consequently, $\mu_{U(X)}(\cup_{j=1}^\infty U_j)=0$. Since $\cup_{j=1}^\infty U_j$ is exactly the set of points in $U$ without the recurrent property, this completes the proof.

\section{Positive Entropy}\label{S7}
	
In this section, we prove the positive entropy property stated in Proposition \ref{E5''}. In \cref{S7.1}, we recall a criterion for positive entropy. In \cref{S7.2}, we prove an ampleness property for Hecke eigensections. In \cref{S7.3}, we complete the proof of positive entropy.

We retain the notation fixed in \cref{s1.6} and the setup \eqref{diag1'}, \eqref{diag2}, and \eqref{diag}. We also adopt the notation from \cref{SCb}, for example, we write $(g_\infty,g_f)\in \mathbb{G}(\mathbb{A}_L)$.

\subsection{Positive entropy for almost every ergodic component}\label{S7.1}

	Recall the geodesic flow $(g_t)$ on $U(X)$ defined in \eqref{2.12}. Instead of discussing the general definition of entropy, we present an equivalent formulation suited to our purposes, describing when a $(g_t)$-invariant measure $\mu_{U(X)}$ on $U(X)$ has positive entropy on almost every ergodic component.

		We form two one-parameter subgroups of $\mathrm{SL}_2(\mathbb{R})$
	\begin{equation}
		u^+_t=\left(\begin{matrix}1&0\\t&1\end{matrix}\right),\quad u^-_t=\left(\begin{matrix}1&t\\0&1\end{matrix}\right).
	\end{equation}
	For a fixed small $\delta>0$ and the geodesic flow $(g_t)$ defined in \eqref{2.12}, we set a neighborhood $U_{\delta,\varepsilon}$ of $\mathrm{Id}\in\mathrm{SL}_2(\mathbb{R})$ by
	\begin{equation}\label{7.2}
		U_{\delta,\varepsilon}=g_{(-\delta,\delta)}\cdot u^-_{(-\varepsilon,\varepsilon)}\cdot u^+_{(-\varepsilon,\varepsilon)}.
	\end{equation}
	Geometrically, $U_{\delta,\varepsilon}$ can be viewed as the $\varepsilon$-neighborhood of a segment of the diagonal subgroup.

	\begin{defi}
We say that a $(g_t)$-invariant measure $\mu_{U(X)}$ on $U(X)$ has positive entropy on almost every ergodic component if for almost every $[g_\infty]\in U(X)$, there exist $C,h>0$ such that for $\varepsilon$ small enough,
\begin{equation}\label{7.3}
\mu_{U(X)}\big([g_\infty U_{\delta,\varepsilon}]\big)\leqslant C\varepsilon^h.
\end{equation}
	\end{defi}

We refer to \cite[\S\,4.3.1, \S\,4.4.2]{Arizona} for the definition of entropy and for precise statements showing that \eqref{7.3} implies positive entropy on almost every ergodic component. The underlying idea is that, for a partition of $U(X)$ with diameter less than $\delta$, the atoms in its refinement under the time-one map $(g_1)$ are well approximated by $U_{\delta,\varepsilon}$.

The aim of this section is to prove the following bound on the mass of tubes, which provides a precise formulation of Proposition \ref{E5''}.
	
	\begin{prop}\label{E4}
		There are $C,h>0$ such that for any $[g_\infty]\in U(X)$, $\varepsilon$ small enough and  a lift $s\in {C}^\infty(U(X),\pi^*\mathscr{F})$ of Hecke-Maass form defined in \cref{S5.4}, we have for $\lv s\rv_{\pi^*\mathscr{F}}^2\in{C}^\infty(U(X))$ and $\varepsilon$ small enough,
		\begin{equation}\label{5.12}
			\int_{[g_\infty U_{\delta,\varepsilon}]}\lv s\rv_{\pi^*\mathscr{F}}^2dv_{X}\leqslant C\varepsilon^h.
		\end{equation}
	\end{prop}

\subsection{Hecke amplifier}\label{S7.2}

	By \eqref{4.7}, we have
	\begin{equation}
		T_{\wp^2}^2=T_{\wp^4}+(q_\wp-1)T_{\wp^2}+q_\wp(q_\wp+1)\mathrm{Id},
	\end{equation}
	which, together with \eqref{4.7'}, implies the following ampleness property.

	\begin{prop}\label{p5.3}
		There is $C>0$ such that for any Hecke eigensection $s\in{C}^\infty(U(X),\pi^*\mathscr{F})$, if we denote the corresponding eigenvalue of $T_{\wp^2}$ and $T_{\wp^4}$ with $\lambda_{\wp^2}$ and $\lambda_{\wp^4}$ respectively, then we have either $\lv\lambda_{\wp^2}\rv\geqslant Cq_\wp$ or $\lv\lambda_{\wp^4}\rv\geqslant Cq_\wp^2$.
	\end{prop}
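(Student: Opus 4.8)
The plan is to derive the dichotomy purely algebraically from the relation
\begin{equation*}
\mathcal{T}_{\wp^2}^2=\mathcal{T}_{\wp^4}+(q_\wp-1)\mathcal{T}_{\wp^2}+q_\wp(q_\wp+1)\mathrm{Id},
\end{equation*}
which is already stated just before the proposition and follows from \eqref{4.7} by counting how pairs of vertices at distance $2k$ from $v$ arise (paths of length $2$ followed by paths of length $2$, with the backtracking contributions producing the lower-order terms, using \eqref{4.7'} for the multiplicities). Evaluating this operator identity on a Hecke eigensection $s$ gives the scalar relation
\begin{equation*}
\lambda_{\wp^2}^2=\lambda_{\wp^4}+(q_\wp-1)\lambda_{\wp^2}+q_\wp(q_\wp+1).
\end{equation*}

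First I would argue by contradiction: suppose both $\lv\lambda_{\wp^2}\rv<Cq_\wp$ and $\lv\lambda_{\wp^4}\rv<Cq_\wp^2$ for a small absolute constant $C>0$ to be fixed. Then on the right-hand side of the scalar relation, the term $q_\wp(q_\wp+1)$ has size $\asymp q_\wp^2$, while $\lv\lambda_{\wp^4}\rv< Cq_\wp^2$, $(q_\wp-1)\lv\lambda_{\wp^2}\rv< Cq_\wp^2$, and $\lv\lambda_{\wp^2}^2\rv< C^2q_\wp^2$; choosing $C$ small enough (say $C=1/10$, with $q_\wp\geqslant 2$) forces the right-hand side to have absolute value at least, say, $q_\wp^2/2$, whereas the left-hand side has absolute value at most $q_\wp^2/100$. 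This is the contradiction, so at least one of the two lower bounds must hold. The point is simply that the ``constant term'' $q_\wp(q_\wp+1)$ in the quadratic relation cannot be cancelled unless one of $\lambda_{\wp^2}$, $\lambda_{\wp^4}$ is itself large on the appropriate scale.

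There is essentially no hard step here; the only thing to be careful about is bookkeeping the constants so that a single absolute $C>0$ works uniformly in $\wp$ (and hence, via \cref{S2.2..}, uniformly in $p\in\mathbb{N}$, since the Hecke eigenvalues of $s_{\lambda}$ do not depend on the fiber representation). One should also record that the same computation applies verbatim when $s$ ranges over eigensections of $\mathcal{T}_{\wp^2}$ on $\mathscr{C}^\infty(U(X),\pi^*\mathscr{F})$ rather than on $\mathscr{C}^\infty(U(X))$, because \eqref{4.7} is defined identically in both settings and the coefficients $q_\wp-1$ and $q_\wp(q_\wp+1)$ come from the tree geometry alone, not from $\rho$. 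If one wants the cleanest statement, verifying the operator identity $\mathcal{T}_{\wp^2}^2=\mathcal{T}_{\wp^4}+(q_\wp-1)\mathcal{T}_{\wp^2}+q_\wp(q_\wp+1)\mathrm{Id}$ directly by the path-counting argument on $T_\wp$ is the one genuinely computational ingredient, but it is routine and is quoted from the standard Hecke-operator literature on regular trees.
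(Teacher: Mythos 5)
Your argument is correct and is exactly the one the paper intends: the paper simply displays the relation $\mathcal{T}_{\wp^2}^2=\mathcal{T}_{\wp^4}+(q_\wp-1)\mathcal{T}_{\wp^2}+q_\wp(q_\wp+1)\mathrm{Id}$ and asserts that, together with \eqref{4.7'}, it "clearly implies" the dichotomy, and your evaluation on an eigensection plus the small-$C$ contradiction (any $C<\sqrt{2}-1$ works) is the routine verification being left implicit. Your remarks on path-counting for the operator identity and on the independence of the coefficients from $\rho$ are consistent with the paper's setup and add nothing contradictory.
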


	For any $Q>0$ we put
	\begin{equation}\label{5.6}
		\mathscr{P}_Q=\{\wp\in\mathrm{Pl}(L)\mid Q/2\leqslant q_\wp\leqslant Q\}.
	\end{equation}
	By Proposition \ref{p5.3}, we can let $\ell$ be either $1$ or $2$ and choose a subset $\mathscr{P}_Q'\subseteq \mathscr{P}_Q$ with 
	\begin{equation}\label{7.7}
\lv\mathscr{P}_Q'\rv\geqslant \frac{1}{2}\lv \mathscr{P}_Q\rv,\quad\lv\lambda_{\wp^{2\ell}}\rv\geqslant Cq_\wp^\ell \text{ for any } \wp\in \mathscr{P}_Q'.
	\end{equation}

Following \cite[Lemma 5.2]{MR4033919} and \cite[Proposition 30]{MR5036694}, for a subset $\mathscr{P}_Q''\subseteq\mathscr{P}_Q'$ to be specified soon, we define a \emph{global amplifier} $T_{Q}$ by
	\begin{equation}\label{7.8}
		T_{Q}=\sum_{\wp\in\mathscr{P}_Q''}T_{\wp^{2\ell}}.
	\end{equation}
	 The Hecke eigensection $s$ in Proposition \ref{p5.3} is also an eigensection of $T_Q$, and by \eqref{7.7} we have the following estimate for the associated eigenvalue,
	\begin{equation}\label{5.8}
	T_{Q}s=\lambda_{Q}s,\quad\lambda_{Q}=\sum_{\wp\in\mathscr{P}_Q''}\lambda_{\wp^{2\ell}}\geqslant C\lv\mathscr{P}_Q''\rv Q^\ell.
	\end{equation}
Let us denote $\mathrm{supp}(T_{Q})$ the support of $T_{Q}$ when we view it as a convolution kernel in \eqref{4.14..}. Using \eqref{4.7'}, the size of $\mathrm{supp}(T_{Q})$ satisfies
	\begin{equation}\label{5.9}	\bv\mathrm{supp}(T_{Q})\bv=\sum_{\wp\in\mathscr{P}_Q''}q_\wp^{2\ell-1}(q_\wp+1)\leqslant C\lv\mathscr{P}_Q''\rv Q^{2\ell}.
	\end{equation}

As in \cite[Lemma 3.3]{MR4033919}, applying the triangle inequality for \eqref{4.7} and \eqref{7.8}, we have
		\begin{equation}\label{5.13}
			\begin{split}
			\lambda_{Q}^2\int_{[g_\infty U_{\delta,\varepsilon}]}\bv s\bv_{\pi^*\mathscr{F}}^2dv_{U(X)}&=	\int_{[g_\infty U_{\delta,\varepsilon}]}\bv T_Q s\bv_{\pi^*\mathscr{F}}^2dv_{U(X)}\\
				&\leqslant \bigg({\sum_{g_f\in\mathrm{supp}(T_Q)}}\Big(\int_{[(g_\infty U_{\delta,\varepsilon}, g_f)]}\bv s\bv_{\pi^*\mathscr{F}}^2dv_{U(X)}\Big)^{1/2}\bigg)^2.
			\end{split}
		\end{equation}

To further estimate \eqref{5.13}, we recall a result from \cite[Lemma 3.4]{MR4033919}.
\begin{lemma}\label{l7.4}
There is $C>0$ such that for any $T_Q$ defined in \eqref{7.8}, we have for $\varepsilon$ small enough,
	\begin{equation}\label{7.12}
	\begin{split}
		\bigg({\sum_{g_f\in\mathrm{supp}(T_Q)}}&\Big(\int_{[(g_\infty U_{\delta,\varepsilon},g_f)]}\bv s\bv_{\pi^*\mathscr{F}}^2dv_{U(X)}\Big)^{1/2}\bigg)^2\\
		&\leqslant C \lv\Big\{g_f,g_f'\in \mathrm{supp}(T_Q)\mid [(g_\infty U_{3\delta,3\varepsilon},g_f)]\cap [g_\infty U_{3\delta,3\varepsilon},g_f']\neq \emptyset\Big\} \rv.
	\end{split}
\end{equation}
\end{lemma}

	\begin{pro}

		A key ingredient in the proof is the fact that when $\varepsilon$ is sufficiently small, the neighborhood $U_{\delta,\varepsilon}$ of the identity $\mathrm{Id}\in \mathrm{SL}_2(\mathbb R)$ is sufficiently flat that the group law on $\mathrm{SL}_2(\mathbb R)$ is well approximated by addition in the Lie algebra.	
		
We claim that there is $C>0$ such that for any small $\varepsilon>0$, there is a finite covering
\begin{equation}\label{7.15'}
	U(X)=\big\{[ g_{\infty,i}U_{\delta,\varepsilon}]\big\}_i
\end{equation}
of $U(X)$ so that any set $[g_{\infty}U_{\delta,\varepsilon}]$ intersects at most $C$ of these sets. To establish this, let us choose a maximal set of $\{[g_{\infty,i}]\}_i$ such that $\{[g_{\infty,i}U_{{\delta}/{3},{\varepsilon}/3}]\}_i$ are disadjoint. Then for any $[g_{\infty}]\in U(X)$, we have $[g_{\infty}U_{{\delta}/{3},{\varepsilon}/3}]$ intersects one of $[g_{\infty,i}U_{{\delta}/{3},{\varepsilon}/3}]$, hence
\begin{equation}
	[g_{\infty}]\in [g_{\infty,i}U_{{\delta}/{3},{\varepsilon}/3}U_{{\delta}/{3},{\varepsilon}/3}^{-1}]\subseteq [g_{\infty,i}U_{{\delta},{\varepsilon}}],
\end{equation}
and this implies \eqref{7.15'}. For $[g_{\infty}]\in U(X)$, if $[g_{\infty}U_{{\delta},{\varepsilon}}]\cap[g_{\infty,i}U_{{\delta},{\varepsilon}}]\neq\emptyset$, we can choose $g'_{\infty,i}\in U_{{\delta},{\varepsilon}}U_{{\delta},{\varepsilon}}^{-1}$ so that $[g_{\infty,i}]=[g_\infty g'_{\infty,i}]$, therefore, the sets $[g_{\infty,i}U_{{\delta}/{3},{\varepsilon}/3}]=[g_\infty g'_{\infty,i}U_{{\delta}/{3},{\varepsilon}/3}]$ are disadjoint and all belong to $[g_\infty U_{{\delta},{\varepsilon}}U_{{\delta},{\varepsilon}}^{-1}U_{{\delta}/{3},{\varepsilon}/3}]$. So the number is bounded by
\begin{equation}
	\frac{\mathrm{Vol}(U_{{\delta},{\varepsilon}}U_{{\delta},{\varepsilon}}^{-1}U_{{\delta}/{3},{\varepsilon}/3})}{\mathrm{Vol}(U_{{\delta}/{3},{\varepsilon}/3})}\sim\frac{(\delta+\delta+\delta/3)(\varepsilon+\varepsilon+\varepsilon/3)^2}{(\delta/3)(\varepsilon/3)^2}\leqslant C.
\end{equation}

By \eqref{5.11}, we have $[(g_\infty U_{\delta,\varepsilon},g_f)]=[g_L^{-1}g_\infty U_{\delta,\varepsilon}]$, which is of the same form as in the preceding discussion. Then by \eqref{7.15'}, we have
\begin{equation}
	\begin{split}
		\int_{[(g_\infty U_{\delta,\varepsilon},g_f)]}\bv s\bv_{\pi^*\mathscr{F}}^2dv_{U(X)}\leqslant \sum_{[g_{\infty,i} U_{\delta,\varepsilon}]\cap [(g_\infty U_{\delta,\varepsilon},g_f)]\neq\emptyset}\int_{[\Gamma g_{\infty,i} U_{\delta,\varepsilon}]}\bv s\bv_{\pi^*\mathscr{F}}^2dv_{U(X)},
	\end{split}
\end{equation}
which clearly implies
\begin{equation}\label{7.19}
	\begin{split}
		\Big(\int_{[(g_\infty U_{\delta,\varepsilon},g_f)]}&\bv s\bv_{\pi^*\mathscr{F}}^2dv_{U(X)}\Big)^{1/2}\\
		&\leqslant \sum_{[g_{\infty,i} U_{\delta,\varepsilon}]\cap [(g_\infty U_{\delta,\varepsilon},g_f)]\neq\emptyset}\Big(\int_{[\Gamma g_{\infty,i} U_{\delta,\varepsilon}]}\bv s\bv_{\pi^*\mathscr{F}}^2dv_{U(X)}\Big)^{1/2}.
	\end{split}
\end{equation}

From \eqref{7.19}, the left hand side of \eqref{7.12} can be bounded by
\begin{equation}\label{7.20}
	\begin{split}
		\bigg({\sum_{g_f\in\mathrm{supp}(T_Q)}}&\Big(\int_{[(g_\infty U_{\delta,\varepsilon},g_f)]}\bv s\bv_{\pi^*\mathscr{F}}^2dv_{U(X)}\Big)^{1/2}\bigg)^2\\
		\leqslant& \bigg(\sum_{\substack{g_f\in\mathrm{supp}(T_Q),\\ [g_{\infty,i} U_{\delta,\varepsilon}]\cap [(g_\infty U_{\delta,\varepsilon},g_f)]\neq\emptyset}} \Big(\int_{[g_{\infty,i} U_{\delta,\varepsilon}]}\bv s\bv_{\pi^*\mathscr{F}}^2dv_{U(X)}\Big)^{1/2}\bigg)^2\\
		\leqslant& \bigg(\sum_i\int_{[g_{\infty,i} U_{\delta,\varepsilon}]}\bv s\bv_{\pi^*\mathscr{F}}^2dv_{U(X)}\bigg)\bigg(\sum_{i}\Big(\sum_{[g_{\infty,i} U_{\delta,\varepsilon}]\cap [(g_\infty U_{\delta,\varepsilon},g_f)]\neq\emptyset}1\Big)^2\bigg).
	\end{split}
\end{equation}
By the construction of $\{[g_{\infty,i} U_{\delta,\varepsilon}]\}$, each point $[g_\infty]$ belongs to at most $C$ of these sets, hence
\begin{equation}\label{7.21}
	\sum_i\int_{[g_{\infty,i} U_{\delta,\varepsilon}]}\bv s\bv_{\pi^*\mathscr{F}}^2dv_{U(X)}\leqslant C\int_{U(X)}\bv s\bv_{\pi^*\mathscr{F}}^2dv_{U(X)}\leqslant C.
\end{equation}
On the other hand, we have
\begin{equation}\label{7.22}
	\begin{split}
		&\sum_{i}\bigg(\sum_{[g_{\infty,i} U_{\delta,\varepsilon}]\cap [(g_\infty U_{\delta,\varepsilon},g_f)]\neq\emptyset}1\bigg)^2\\
		&=\lv\Big\{i,g_f,g_f'\mid [g_{\infty,i} U_{\delta,\varepsilon}]\cap [(g_\infty U_{\delta,\varepsilon},g_f)]\neq\emptyset,\  [g_{\infty,i} U_{\delta,\varepsilon}]\cap[(g_\infty U_{\delta,\varepsilon},g_f')]\neq\emptyset\Big\}\rv\\
		&\leqslant C \lv\Big\{g_f,g_f'\in \mathrm{supp}(T_Q)\mid [(g_\infty U_{3\delta,3\varepsilon},g_f)]\cap [g_\infty U_{3\delta,3\varepsilon},g_f']\neq \emptyset\Big\} \rv.
	\end{split}
\end{equation}
Here, the last inequality holds because, for each fixed pair $(g_f,g_f')$, there are at most $C$ indices $i$ satisfying the two intersection conditions, moreover, we have $g_{\infty,i}\in[(g_\infty U_{\delta,\varepsilon} U_{\delta,\varepsilon}^{-1},g_f)]\cap [(g_\infty U_{\delta,\varepsilon} U_{\delta,\varepsilon}^{-1},g_f')]\neq\emptyset$ and $U_{\delta,\varepsilon} U_{\delta,\varepsilon}^{-1}\subseteq U_{3\delta,3\varepsilon}$.

Putting together \eqref{7.20}, \eqref{7.21}, and \eqref{7.22}, we obtain \eqref{7.12}.\qed
	\end{pro}

The following result is taken from \cite[Lemma 5.1]{MR4033919}.
\begin{lemma}\label{l7.5}
There exists a subset of good primes $\mathscr{P}_Q''\subseteq \mathscr{P}_Q'$ with
\begin{equation}\label{7.13}
	\lv\mathscr{P}_Q''\rv\geqslant \lv\mathscr{P}_Q'\rv-C\log(Q).
\end{equation}
Moreover, there exist $C,c>0$ such that for $T_Q$ defined in \eqref{7.8}, we have
\begin{equation}\label{7.14}
	\begin{split}
		\Bv\Big\{g_f,g_f'\in \mathrm{supp}(T_{Q})\mid [(g_\infty U_{3\delta,3\varepsilon},g_f)]\cap[(g_\infty U_{3\delta,3\varepsilon},g_f')]\neq \emptyset\Big\}\Bv&\\
		\leqslant C\big(\lv \mathscr{P}_Q''\rv^2+\bv\mathrm{supp}(T_Q)\bv\big)&\text{ if }Q\leqslant C\varepsilon^{-c}.
	\end{split}
\end{equation}
	\end{lemma}
	
\begin{pro}
This is the arithmetic input in the positive entropy argument. A complete proof can be found in the original paper,  based on a commutator argument in \cite[Lemma 3.3]{MR1957735}. As noted in \cite[\S\,3.1.2]{MR5036694}, one modification required in the number field setting is that the Diophantine result \cite[Proposition 4.7]{MR4033919} should be replaced by its number field generalization \cite[Proposition 10]{MR5036694}. We briefly indicate the idea of this generalization.

Recall $\mathbb{G}$ defined in \eqref{5.5.}. For any $g_L\in\mathbb{G}(L)$, let $g_L$ act on $D_{a,b}(L)$ by left multiplication. With respect to the basis $\{1,\mathrm{i},\mathrm{j},\mathrm{ij}\}$, we can view $g_L\in \mathrm{SL}_4(L)$. For any matrix entry $(g_L)_{ij}$, the product formula gives
\begin{equation}\label{7.23}
\lv(g_L)_{ij}\rv_\mathbb{C}\prod_{\text{infinite nonidentity\ }\tau\in\mathrm{Pl}(L)}\lv\tau(g_L)_{ij}\rv_\mathbb{C}\prod_{\text{finite\ }\wp\in\mathrm{Pl}(L)}\lv(g_L)_{ij}\rv_{\wp}=1
\end{equation}
whenever $(g_L)_{ij}\neq 0$. Thus, if certain conditions imply that the left hand side of \eqref{7.23} is sufficiently small, we must have $(g_L)_{ij}=0$.

Compared with the group over $\mathbb{Q}$ case in \cite[Lemma 3.3]{MR1957735} and \cite[Lemma 5.1]{MR4033919}, in \eqref{7.23}, the factors indexed by nonidentity infinite places are new. The key observation of \cite[(3.1)]{MR5036694} is that these extra factors do not affect the argument, because $\lv\tau(g_L)_{ij}\rv_\mathbb{C}$ are bounded. Indeed, similar to the argument leading to \eqref{4.2'}, the element $\tau(g_L)$ acts on $D_{\tau(a),\tau(b)}(L)$ and can be viewed $\tau(g_L)\in \mathrm{O}_4(L)\cap \mathrm{SL}_4(L)=\mathrm{SO}_4(L)$. Hence all of its entries are bounded.\qed
\end{pro}

\subsection{Proof of positive entropy}\label{S7.3}

	By \eqref{5.6}, \eqref{7.7}, \eqref{7.13}, and the prime ideal theorem for number fields, we have for $Q$ large,
	\begin{equation}\label{7.15}
		\lv\mathscr{P}_Q''\rv\sim C\frac{Q}{\log Q}
	\end{equation}

	Combining \eqref{5.8}, \eqref{5.9}, \eqref{5.13}, \eqref{7.12}, \eqref{7.14}, and \eqref{7.15}, it follows that
	\begin{equation}\label{5.15}
		\begin{split}
			\int_{[g_\infty U_{\delta,\varepsilon}]}\lv s\rv_{\pi^*\mathscr{F}}^2dv_{U(X)}&\leqslant C\lv\lambda_{Q}\rv^{-2}\Big(\lv \mathscr{P}_Q''\rv^2+\bv\mathrm{supp}(T_Q)\bv\Big)\\
			&\leqslant C(Q^{-2\ell}+\lv \mathscr{P}_Q''\rv^{-1})\leqslant C\big(\varepsilon^{\ell c}-\varepsilon^{c}\log\varepsilon\big),
		\end{split}
	\end{equation}
	which implies \eqref{5.12}.

	\addcontentsline{toc}{section}{References}
	\bibliographystyle{abbrv}
	\def\cprime{$'$} \def\cprime{$'$}

\end{document}